\subjclass[2020]{primary: 14N07, 15A69 
}
\newtheoremstyle{alstandard}{7pt}{3pt}{\rm}{}{\scshape}{:}{0.5em}{}
\theoremstyle{alstandard}
\declaretheorem[name=Theorem]{theorem}
\numberwithin{theorem}{section}
\declaretheorem[sibling=theorem, name=Lemma]{lemma}
\declaretheorem[sibling=theorem, name=Proposition]{prop}
\declaretheorem[sibling=theorem, name=Definition]{defi}
\declaretheorem[sibling=theorem, name=Corollary]{cor}
\declaretheorem[sibling=theorem, name=Remark]{remark}
\newcommand{\R}{\mathbb{R}}
\newcommand{\C}{\mathbb{C}}
\newcommand{\N}{\mathbb{N}}
\newcommand\Z{\mathbb Z}
\renewcommand\P{\mathbb P}
\newcommand{\E}{\mathbb E}
\DeclareMathOperator{\id}{id}
\DeclareMathOperator{\GM}{GM}
\DeclareMathOperator{\apex}{ap}
\DeclareMathOperator{\smooth}{reg}
\renewcommand{\i}{\overset\circ\imath} 
\DeclareMathOperator{\GL}{GL}
\begin{document}
	\title[Nondefectivity of reducible secant varieties]{On defectivity of joins, reducible secants and Fröberg's conjecture} 
	
	\author[Blomenhofer]{Alexander Taveira Blomenhofer}
	\address{QMATH (University of Copenhagen), Lyngbyvej 2, 2100 Copenhagen, Denmark}
	\email{atb@math.ku.dk}
	
	\author[Casarotti]{Alex Casarotti}
	\address{Università di Ferrara, Dipartimento di Matematica, Via Nicolò Machiavelli 30, 44121, Italy}
	\email{alex.casarotti@unife.it}

	\begin{abstract} 
		We introduce the notion of $ r $-defectivity for a vector bundle on a quasi-projective variety. 
		Using this tool, we prove several previously unknown cases of Fröberg's conjecture and also of the postulation problem for fat point schemes. Our techniques also allow us to study $ r $-nondefectivity for joins and secants of reducible varieties. As a consequence, we derive results for mixture distributions and partition ranks.   
	\end{abstract}

	\maketitle
	\raggedbottom
	
\section{Introduction}\label{sec:intro}
Nondefectivity is a widely studied notion in the context of secant varieties of an irreducible affine cone. In this work, we examine what can be said if one drops the requirement of irreducibility. More generally, we will consider \emph{$ V $-embedded vector bundles}, which associate a linear subspace of $ V $ to each point of a projective variety, and we introduce the notion of $ r $-defectivity for such bundles. 

Recall that, given an irreducible affine cone $ X $, a variety is called $ r $-\emph{nondefective}, if a sum of tangent spaces $ \langle T_{x_1} X,\ldots, T_{x_m} X\rangle $ at general points $ x_1,\ldots,x_r \in X$ has dimension $ r\cdot \dim X $. 

We aim to generalize the notion of $ r $-nondefectivity to reducible varieties $ X $ and to arbitrary vector bundles. The classical definition will correspond to the case where the vector bundle is the embedded tangent bundle of $ X $. This generalization is motivated by questions about Fröberg's conjecture, fat point schemes, mixture distributions and partition ranks. In the following, let us briefly describe each of the problems which motivated this work.

\medskip\noindent
\textbf{Fröberg's conjecture} \cite{froberg1985inequality} asks about the Hilbert series of ideals generated by general forms $ f_1,\ldots,f_m \in \C[x_1,\ldots,x_n] $. 
A seminal result of Nenashev \cite{nenashev2017note} allows to determine the Hilbert series in some cases, assuming that all generators $ f_1,\ldots,f_m $ are of the same degree $ d $. Nenashev's result is special in the sense that it allows the number $ m $ of forms to be relatively large in relation to the number $ n $ of variables. 
For forms of arbitrary degree, no similar result is known. 

\medskip\noindent
\textbf{The postulation problem for fat point schemes} asks to determine the Hilbert function of the ideal $ I_{p_1}^{m_1} \cap \ldots \cap I_{p_k}^{m_k} $ of forms vanishing on given points $ p_1,\ldots,p_k \in \mathbb \C^n $ to multiplicities $ m_1,\ldots,m_k \in \N $. The problem in general is open, even if some remarkable partial result has been proved in \cite{Postinghel_2023} and \cite{Postinghel_Dumitrescu}.

\medskip\noindent
The \textbf{partition rank} $r$ was introduced by \cite{Naslund_2020} as a generalization of slice rank. It corresponds to the smallest number $ r $ needed to write a tensor $ T $ as a sum of tensors $ t_1 + \ldots + t_r $, where each $ t_i $ splits according to a partition. The partition may be chosen freely out of a set $ \Lambda $ of allowed partitions (see \cite{Oneto2025} for a self-contained introduction to various interesting notions of rank). In this work we aim to study when a tensor of small partition rank has only finitely many decompositions of minimum rank. 

\medskip\noindent
The \textbf{parameter identifiability problem} for mixture distributions from moments asks, when the moments of a mixture distribution uniquely determine the parameters. E.g., one may ask whether a mixture of five Gaussian and seven Laplace distributions is uniquely determined by its moments of degree $ 5 $. 


\subsection{Techniques and reducible secants}
A common denominator of the abovementioned problems is that they can be treated in the framework of vector bundles. More precisely, they are questions about the (non)defectivity of $ V $-embedded vector bundles. We will describe in detail how to associate a vector bundle to each problem in \Cref{sec:applications}.

For now, let us focus on secants of reducible varieties. This framework will in particular encompass the problems of partition rank and mixture distributions. If a variety $ X $ has $ k $ irreducible components $ X = X_1\cup \ldots\cup X_k $, then its $ r $-secant variety $ \sigma_r(X) $ is the union of joins $ J(X_{i_1},\ldots,X_{i_r}) $, where $ i_1,\ldots,i_r\in \{1,\ldots,k\} $. Not all joins need to be irreducible components, but all irreducible components of $ \sigma_r(X) $ are joins. 
For joins as above, there exists a natural notion of expected dimension, which is $ \min\{\dim X_{i_1} + \ldots + \dim X_{i_r}, \dim \langle X \rangle\} $. A join is called defective, if its dimension is strictly less than the expected dimension. Unfortunately, there are not many general results on the dimensions of joins. However, we are able to provide results by looking simultaneously at all joins, which occur in the secant of a fixed algebraic variety. 
Intuitively, the reason for this is that as the number $ k $ of irreducible components of $ X $ is fixed and $ r $ grows large, the joins $ J(X_{i_1},\ldots,X_{i_r}) $ will contain large secant varieties of irreducible components of $ X $, i.e., they have to ``repeat'' irreducible components. 

In \Cref{cor:reducible-secants-nondefective}, we will derive a general criterion to determine when a (reducible) secant variety $ \sigma_r(X) $ is nondefective. 
Note that there are two reasonable notions of defectivity for a reducible secant variety, depending on whether one cares about all joins in $ \sigma_r(X) $ or only about the non-redundant joins. These will be defined in \Cref{def:geometrically-defective}.

\subsection{Organisation of the paper}
In \Cref{sec:stationarity-lemma-rectangular}, we prove the main result, \Cref{thm:k-factors}, which provides a criterion for a vector bundle to be nondefective. The key technical ingredient is the \emph{rectangular stationarity lemma}, see \Cref{lem:stationarity-lemma-rectangular}. This lemma tracks a certain space along a ``rectangular'' sequence of multi-indices in $ \N_0^n $ and provides a tool to establish nondefectivity of joins. 
For the special case where the vector bundle is the embedded tangent bundle of a variety, we obtain nondefectivity results for secant varieties. See \Cref{cor:reducible-secants-nondefective}. In \Cref{thm:diagonal-filling-criterion}, we give a criterion when a join is filling. 
In \Cref{sec:applications}, we discuss applications to Fröberg's conjecture (\Cref{{thm:froeberg}}), fat point schemes (\Cref{thm:fat-points}),  Partition ranks (\Cref{thm:partition-rank}) and mixture distributions (\Cref{thm:mixture-gaussian-laplace}). 

\subsection{Related work}
Fröberg's conjecture \cite{froberg1985inequality} is a long-standing open problem about the Hilbert function of an ideal $ I = (f_1,\ldots,f_r)$ generated by general forms of given degrees. Various partial results have been achieved, see for instance \cite{Migliore2003} where Fröberg's conjecture is linked to the Weak Lefschetz Property (WLP), and \cite{Migliore20031} where the case of $r=n+1$ forms is treated.
Particularly relevant for our work is the seminal result of Nenashev \cite{nenashev2017note}, who solved many cases of Fröberg's conjecture in the special case, where all forms $ f_1,\ldots,f_r $ have equal degree. Our \Cref{thm:froeberg} can be seen as a generalization of Nenashev's result to forms of non-equal degree. 

The postulation of zero dimensional fat point schemes is a classical open problem in algebraic geometry: It asks when the linear system 
of homogeneous polynomials of given degree 
that vanish to prescribed multiplicities $m(p_i)$ at the general points $p_1,\dots,p_r \in \C^n$, has the {expected} dimension.  
The case of double points, i.e. $m(p_i)=2$ for $i=1,\dots,r$, has been solved by Alexander-Hirschowitz in \cite{AH95}. For general multiplicities, the characterization of defective cases, i.e. linear systems not of the expected dimensions, is still widely open: various notions of obstructions for the expected dimensionality have been given in \cite{Postinghel_Dumitrescu}. For points in the plane, the famous SHGH-conjecture (see \cite{SHGH}) proposes an explicit geometric criterion for the failure of the expected dimensionality, see for instance \cite{Cools07}. Our \Cref{thm:fat-points} uses a new approach with respect to the classical techniques of postulation problems, with which we are able to exhibit many nondefective cases in large numbers of variables. 

The identifiability of Gaussian mixtures is widely studied both from the algebraic perspective (\cite{Amendola_Faugere_Sturmfels_2016}, \cite{Amendola_Ranestad_Sturmfels_2018}) and from the computational side (\cite{Moitra_Valiant_2010}, \cite{Ge_Huang_Kakade_2015}). More general types of mixture distributions are very interesting from a statistical perspective, but results are sparse. To the best of our knowledge, we provide the first general identifiability result, which applies to mixtures of \emph{distinct} distributions. As an example application, we show in \Cref{thm:mixture-gaussian-laplace} how our main result can be applied to show finite-to-one identifiability of a mixture of Laplacian and Gaussian distributions. 

Partition rank was introduced by Naslund \cite{Naslund_2020} as a generalization of slice rank (\cite{Tao_2016_SymmetricCapset}, \cite{Tao_2016_SliceRankNotes}) and is mostly studied from the combinatorial side. With our \Cref{thm:partition-rank}, we hope to encourage the study of partition ranks also from the viewpoint of secant varieties, nondefectivity and identifiability. 

The methodology of this paper generalizes ideas from our earlier work \cite{Blomenhofer_Casarotti_2023}, where we extend the \emph{stationarity lemma} proved therein (\cite[Section 3]{Blomenhofer_Casarotti_2023}) to the larger class of reducible varieties. The earlier work \cite{Blomenhofer_Casarotti_2023} in turn builds on ideas from Nenashev's proof of the equal-degree case of Fröberg's conjecture. Note that our results presented in \Cref{fig:froeberg} and \Cref{fig:fat-points} leave a small gap of cases, where we cannot prove nondefectivity of the respective bundles. This gap is a necessary limitation of general results, which apply to all varieties, and it occurs already in the setting of irreducible varieties, see \cite{Blomenhofer_Casarotti_2023}. In fact, it is well-understood which irreducible affine cones $ X $ have the largest number of defective ranks, see \cite{Adlandsvik_1988}. Complete classifications are very difficult, but they have recently been achieved for Segre-Veronese varieties of Veronese-degree at least $ 3 $, see \cite{Abo_Brambilla_Galuppi_Oneto_2024}. We note that our earlier work \cite{Blomenhofer_Casarotti_2023} was used as an ingredient for the complete classification in \cite{Abo_Brambilla_Galuppi_Oneto_2024}. This gives hope that future work might be able to close the aforementioned gaps related to Fröberg's conjecture and fat point schemes. 

	\section{Preliminaries}\label{sec:preliminaries}

Let us briefly recall the main notions used in this paper. Most of our varieties will be (quasi-)affine cones $ X $ in a linear space $ V $, i.e. $\mathbb{C} \cdot X \subseteq X$. The corresponding projective varieties we denote by $ \P(X) \subseteq \P(V) $.

Frequently, we will also have a group $ G $ acting on the space $ V $, giving it the structure of a $ G $-module. 
We say that a variety $ X $ is $ G $-\emph{invariant}, if for all $ v\in X $ and $ g\in G $, it holds $ gv\in X $. We use two different notions of irreducibility, that are not to be confused: A variety is called irreducible, if it cannot be covered by the union of two proper subvarieties. A $ G $-module $ V $ is called irreducible, if $ V $ is not the zero module and $ V $ does not contain any proper nonzero $ G $-submodules. The set of smooth points of a variety $ X $ is denoted $ X_{\smooth} $. Note that an affine cone is smooth if and only if its only singular point is the origin in the vector space $V$. For every smooth point $x \in X$ we denote by $T_x X$ the (embedded) affine tangent space to $X$ at $x$. If $x_1,\dots,x_r$ are smooth points of $X$ we denote by $\langle T_{x_1}X_1,\ldots, T_{x_r} X_r  \rangle$ the linear span of the corresponding tangent spaces.

\begin{defi}
	The \emph{$r$-th secant variety} $\sigma_r (X)$ of the affine cone $X$ is the Zariski closure of the set of vectors that can be spanned by $r$ points in $X$, i.e.
	\[	
		\sigma_r(X):=\overline{ \{x_1 + \ldots + x_r \mid x_1,\ldots,x_r \in X\}}
	\] 
	Similarly, the \emph{join} $ J(X_1,\ldots,X_r) $ of affine cones $ X_1,\ldots,X_r $ is defined as the closure of the set $ \{x_1 + \ldots + x_r \mid x_1\in X_1,\ldots,x_r \in X_r\} $. 
\end{defi}
The \emph{expected dimension} $ e(X_1,\ldots,X_r) $ of the join  $J(X_1,\ldots,X_r) $ is defined as $ \min\{\dim X_1 + \ldots + \dim X_r , \dim V\}$. If the dimension of $ J(X_1,\ldots,X_r) $ is smaller than expected, we say that $X$ is $r$-\emph{defective}. Otherwise, $ X $ is called $ r $-\emph{nondefective}.

Secant varieties are a special case of joins, as $ \sigma_r(X) = J(X,\ldots,X) $. If $ X_1,\ldots,X_r$ are irreducible, then so is the join $ J(X_1,\ldots,X_r) $. 
However, we will be focused on the secants where $ X = X_1\cup \ldots \cup X_k $ is reducible, with $ k $ irreducible components $ X_1,\ldots,X_k $. As briefly outlined in \Cref{sec:intro}, it holds that the secant of a reducible variety $ X $ decomposes as a union of joins of its irreducible components. Precisely, one has 
\begin{align}\label{eq:secant-reducible-join-decomp}
	\sigma_r(X_1\cup \ldots \cup X_k) = \bigcup_{i_1,\ldots,i_r = 1}^k J(X_{i_1},\ldots,X_{i_r})
\end{align}
Recall that after removing redundant joins, \eqref{eq:secant-reducible-join-decomp} gives an irreducible decomposition of $ \sigma_r(X) $. In general, it does not make sense to associate an expected dimension to a secant variety, as its irreducible components might have different dimensions. However, we can still define when a reducible variety is $ r $-defective. There are two reasonable notions of defectivity for a reducible secant variety, depending on whether one cares about all joins in $ \sigma_r(X) $ or only about the non-redundant joins.

\begin{defi}\label{def:geometrically-defective}
	Let $ X $ be a an affine cone with irreducible components $ X_1,\ldots,X_k $. 
	\begin{enumerate}
		\item $ X = X_1\cup \ldots\cup X_k $ is called \emph{$ r $-defective}, if any of the $ r $-joins $ J(X_{i_1},\ldots,X_{i_r}) $ is defective.
		\item We say that $ X = X_1\cup \ldots\cup X_k $ is \emph{geometrically $ r $-defective}, if there is an irreducible component $ J(X_{i_1},\ldots,X_{i_r}) $ of $ \sigma_r(X) $, which is defective. 
	\end{enumerate}
\end{defi}
Our result in \Cref{sec:stationarity-lemma-rectangular} will guarantee the stronger property of $ r $-nondefectivity, and hence also geometric $ r $-nondefectivity. 
The main tool in order to compute the dimension of joins and secant varieties dates back to Terracini \cite{Te12} and can be summarized as follows:
\begin{lemma}[{Terracini, see \cite{Te12}}]
	For general $ x_1 \in X_1,\ldots,x_r \in X_r $ and general $ z\in \langle x_1,\ldots,x_r \rangle $, the tangent space to the join $ J(X_1,\ldots,X_r) $ at  $ z $ equals $\langle T_{x_1}X_1,\ldots, T_{x_r} X_r  \rangle$. 

	Moreover, for smooth points $ x_1 \in X_1,\ldots,x_r \in X_r $, and any $ z\in \langle x_1,\ldots,x_r \rangle $, $\langle T_{x_1}X,\ldots, T_{x_r} X  \rangle$ is contained in the tangent to $ J(X_1,\ldots,X_r)$ at $ z $. 
\end{lemma}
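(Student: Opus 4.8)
The plan is to present $J := J(X_1,\ldots,X_r)$ as the closure of the image of an explicit parametrizing map, and to extract its tangent space from the differential of that map via generic smoothness in characteristic zero. First I record what the cone structure gives: since each $X_i$ is an affine cone, scaling by a nonzero $\lambda$ is an automorphism of $X_i$, so for any smooth point $w\in X_i$ the tangent space $T_wX_i$ is a linear subspace of $V$ satisfying $T_{\lambda w}X_i=\lambda\cdot T_wX_i=T_wX_i$; moreover $w\in T_wX_i$ whenever $w\neq 0$, and $T_0X_i=\langle X_i\rangle\supseteq T_wX_i$ (since forms of degree $\geq 2$ in the ideal of a cone have vanishing differential at the origin, so $T_0X_i$ is cut out only by the linear forms vanishing on $X_i$). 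Now consider the morphism
\[
	\psi\colon \prod_{i=1}^r X_i^{\smooth}\times\mathbb A^r\longrightarrow J,\qquad (w_1,\ldots,w_r,\lambda)\longmapsto \lambda_1 w_1+\cdots+\lambda_r w_r ,
\]
whose source is smooth and whose image is dense in $J$ (it already contains $\{w_1+\cdots+w_r : w_i\in X_i^{\smooth}\}$). Its differential at $(w,\lambda)$ is $(v,\mu)\mapsto\sum_i\lambda_i v_i+\sum_i\mu_i w_i$; if every $w_i$ and every $\lambda_i$ is nonzero, the first sum ranges over $\sum_i T_{w_i}X_i$ while the second contributes only vectors already in $\sum_i T_{w_i}X_i$, so $\im d\psi_{(w,\lambda)}=\langle T_{w_1}X_1,\ldots,T_{w_r}X_r\rangle$.

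For the equality I would invoke generic smoothness: in characteristic zero there is a dense open $U\subseteq J_{\smooth}$ over which $\psi$ is a smooth morphism, hence $d\psi_{(w,\lambda)}$ is surjective onto $T_{\psi(w,\lambda)}J$ for every $(w,\lambda)$ over $U$. Combined with the computation above, $T_zJ=\langle T_{w_1}X_1,\ldots,T_{w_r}X_r\rangle$ for all $z=\sum_i\lambda_i w_i$ lying over $U$, subject to the open conditions $w_i\neq0$, $\lambda_i\neq0$. A routine fibre-dimension argument then shows that this good locus in $\prod_i X_i^{\smooth}\times\mathbb A^r$ still dominates $\prod_i X_i^{\smooth}$ with fibres that are dense open in $\mathbb A^r$; and for fixed $x_1,\ldots,x_r$ the map $\lambda\mapsto\sum_i\lambda_i x_i$ is a surjective, hence open, linear map onto $\langle x_1,\ldots,x_r\rangle$. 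Therefore for general $x_1\in X_1,\ldots,x_r\in X_r$ and general $z\in\langle x_1,\ldots,x_r\rangle$ one obtains $T_zJ=\langle T_{x_1}X_1,\ldots,T_{x_r}X_r\rangle$.

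For the containment statement there is no genericity to exploit, so generic smoothness is unavailable; instead I would use only functoriality of Zariski tangent spaces. Given smooth points $x_i\in X_i$ and an arbitrary $z=c_1x_1+\cdots+c_rx_r\in\langle x_1,\ldots,x_r\rangle$, set $y_i:=c_ix_i\in X_i$ and apply the everywhere-defined morphism $\Phi\colon\prod_iX_i\to J$, $(w_1,\ldots,w_r)\mapsto\sum_iw_i$, at the point $y=(y_1,\ldots,y_r)$, which maps to $z$. The induced map on Zariski tangent spaces is $\bigoplus_iT_{y_i}X_i\to T_zJ$, $(v_i)\mapsto\sum_iv_i$, so $\sum_iT_{y_i}X_i\subseteq T_zJ$; and $T_{y_i}X_i$ equals $T_{x_i}X_i$ when $c_i\neq0$ and equals $T_0X_i=\langle X_i\rangle\supseteq T_{x_i}X_i$ when $c_i=0$. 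Hence $\langle T_{x_1}X_1,\ldots,T_{x_r}X_r\rangle\subseteq T_zJ$.

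I expect the only real subtlety to be not the differential computations — both are immediate — but the passage from ``$z$ general in $J$'' (where generic smoothness speaks) to ``$z$ general in the span of general points $x_1,\ldots,x_r$'' (as the statement demands). This is exactly where the cone hypothesis enters: the identity $T_{\lambda w}X_i=T_wX_i$ lets one slide the base point of each tangent space along the rulings of the cone, and the elementary fibre-dimension lemma guarantees that the dense open good locus in $\prod_iX_i^{\smooth}\times\mathbb A^r$ still meets almost every slice $\{x\}\times\mathbb A^r$ in a dense open subset.
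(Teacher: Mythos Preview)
Your argument is correct and is essentially the standard modern proof of Terracini's lemma: parametrize the join by the addition map on $\prod_i X_i^{\smooth}\times\mathbb A^r$, compute the differential, and invoke generic smoothness in characteristic zero to upgrade the automatic containment to an equality on a dense open set. The handling of the cone condition (in particular $T_{\lambda w}X_i=T_wX_i$ and $T_0X_i=\langle X_i\rangle$) and the care taken in passing from ``general $z\in J$'' to ``general $z$ in the span of general $x_i$'' are both appropriate and accurate.

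There is nothing to compare against in the paper itself: the authors state the lemma with attribution to Terracini \cite{Te12} and do not supply a proof, treating it as a classical black box. Your write-up therefore goes beyond what the paper does. If anything, one could trim it slightly --- the $\mathbb A^r$ factor in $\psi$ is redundant once you use that each $X_i$ is a cone (so scaling is already built into the $X_i$-factors), and then $\Phi$ alone already has dense image --- but this is cosmetic and does not affect correctness.
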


\begin{prop}\label{prop:apex-space}
	The \emph{apex space}\footnote{The apex space is classically known as the vertex of an irreducible variety $X$, see for instance the seminal book \cite{zak1993tangents}. Here we use the apex notation in order to be consistent with the setting of our previous work \cite{Blomenhofer_Casarotti_2023}.}	
	 of an (irreducible) affine cone $ X $ is defined as
	\begin{align*}
		\apex(X) = \{x\in X \mid X+x = X\}.
	\end{align*}
	For brevity, we also denote $ \apex_r (X) := \apex(\sigma_r(X)) $. It holds that 
	\begin{align*}
		\apex_r(X) = \bigcap_{(x_1,\ldots,x_r) \in \mathcal{U}_{r, X} } \langle T_{x_1} X,\ldots, T_{x_r} X \rangle, 
	\end{align*}
	where $ \mathcal{U}_{X, r} $ denotes the set 
	\begin{align*}
		\{x\in X_{\mathrm{reg}}^r \mid \langle T_{x_1} X,\ldots, T_{x_r} X \rangle = T_{x_1 + \ldots + x_r} \sigma_r (X) \text{ has generic dimension} \}.
	\end{align*}
\end{prop}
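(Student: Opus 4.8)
The plan is to reduce the statement to a single fact about irreducible affine cones and then invoke Terracini's lemma. The fact is: \emph{for any irreducible affine cone $Y\subseteq V$ and any Zariski-dense subset $D\subseteq Y_{\smooth}$, one has $\apex(Y)=\bigcap_{y\in D}T_yY$} (a classical statement about apex spaces, cf.\ \cite{zak1993tangents}). I would prove this first and then apply it to $Y=\sigma_r(X)$ with a suitable $D$.

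To prove the fact, I would first observe that $\apex(Y)$ is a linear subspace of $Y$: if $u,u'\in\apex(Y)$ then $Y+(u+u')=(Y+u)+u'=Y$, and since $Y$ is a cone $Y+\lambda u=\lambda(Y+u)=Y$ for every $\lambda\in\C$, while $0\in Y$ forces $u+u',\lambda u\in Y$. Hence for $u\in\apex(Y)$ and any $y\in Y_{\smooth}$ the line $y+\C u$ lies in $Y$, so $u\in T_yY$; this gives $\apex(Y)\subseteq\bigcap_{y\in D}T_yY$. For the reverse inclusion, take $v\in\bigcap_{y\in D}T_yY$ and consider the constant-coefficient derivation $D_v=\sum_i v_i\,\partial_i$. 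The condition $v\in T_yY$ says $(D_vf)(y)=0$ for all $f\in I(Y)$, so for each such $f$ the polynomial $D_vf$ vanishes on the dense set $D$, hence on $\overline D=Y$; thus $D_v$ maps $I(Y)$ into itself. The finite Taylor expansion $f(y+v)=\sum_{k\ge0}\tfrac1{k!}(D_v^{k}f)(y)$, valid over $\C$, then gives $f(y+v)=0$ for all $y\in Y$ and $f\in I(Y)$, i.e.\ $Y+v\subseteq Y$; applying the same to $-v$ (legitimate, the intersection being a linear space) yields $Y+v=Y$, and $v=0+v\in Y$, so $v\in\apex(Y)$.

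Next I would record two points about $\mathcal U_{X,r}$: it contains a dense open subset of $X_{\smooth}^{r}$, and $x_1+\dots+x_r$ is a smooth point of $\sigma_r(X)$ for every $(x_1,\dots,x_r)\in\mathcal U_{X,r}$. The second is immediate: membership forces $\langle T_{x_1}X,\dots,T_{x_r}X\rangle$ to have the generic (hence minimal) dimension $\dim\sigma_r(X)$, and since $\langle T_{x_1}X,\dots,T_{x_r}X\rangle\subseteq T_{x_1+\dots+x_r}\sigma_r(X)$ (smooth-points part of Terracini's lemma), the tangent space there has minimal dimension, so that point is smooth. For the first, Terracini's lemma provides a dense open $\Omega\subseteq X_{\smooth}^{r}\times(\C^{\ast})^{r}$ on which $T_{\sum_i\lambda_ix_i}\sigma_r(X)=\langle T_{x_1}X,\dots,T_{x_r}X\rangle$ has generic dimension (for general $((x_i),(\lambda_i))$ the point $\sum_i\lambda_ix_i$ is a general point of $\sigma_r(X)$). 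Since $X$ is a cone, $T_{\lambda x}X=T_xX$ for $\lambda\in\C^{\ast}$, so $\langle T_{x_i}X\rangle=\langle T_{\lambda_ix_i}X\rangle$; as also $\sum_i\lambda_ix_i=\sum_i(\lambda_ix_i)$, the substitution $\beta\colon((x_i),(\lambda_i))\mapsto(\lambda_ix_i)$ maps $\Omega$ into $\mathcal U_{X,r}$. As $\beta$ is a dominant morphism onto $X^{r}$, its image $\beta(\Omega)$ is dense, so $\mathcal U_{X,r}$ contains a dense open subset of $X^{r}$. Finally, $\Psi\colon X^{r}\to V$, $(x_i)\mapsto\sum_i x_i$, has image dense in $\sigma_r(X)$ by definition, so $\Psi(\mathcal U_{X,r})$ is dense in $\sigma_r(X)$, and lies in $\sigma_r(X)_{\smooth}$ by the preceding remark.

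With these in hand I would assemble the two inclusions. For ``$\subseteq$'': given $(x_i)\in\mathcal U_{X,r}$, the point $z:=x_1+\dots+x_r$ is smooth on $\sigma_r(X)$ with $T_z\sigma_r(X)=\langle T_{x_1}X,\dots,T_{x_r}X\rangle$, so the easy inclusion of the cone fact, applied at $z$, gives $\apex_r(X)=\apex(\sigma_r(X))\subseteq\langle T_{x_1}X,\dots,T_{x_r}X\rangle$; intersecting over $\mathcal U_{X,r}$ proves this direction. For ``$\supseteq$'': apply the cone fact to $Y=\sigma_r(X)$ and $D=\Psi(\mathcal U_{X,r})\subseteq\sigma_r(X)_{\smooth}$ to get $\apex_r(X)=\bigcap_{z\in D}T_z\sigma_r(X)$, then rewrite each $T_z\sigma_r(X)$ as $\langle T_{x_1}X,\dots,T_{x_r}X\rangle$ for some $(x_i)\in\mathcal U_{X,r}$ with $x_1+\dots+x_r=z$. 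The step most in need of care is the nontrivial inclusion of the cone fact — the derivation/Taylor argument, which is where characteristic zero enters — together with the cone-rescaling bookkeeping guaranteeing that $\mathcal U_{X,r}$ is not too small; the rest is a direct application of Terracini's lemma.
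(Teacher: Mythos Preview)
Your argument is correct. The paper does not actually prove this proposition but defers entirely to \cite[Prop.~2.2]{Blomenhofer_Casarotti_2023}, so there is no in-paper proof to compare against; what you have written is a clean, self-contained version of the classical vertex argument (the derivation/Taylor step showing $D_v$ preserves $I(Y)$, combined with Terracini's lemma and the cone-rescaling trick to ensure $\mathcal U_{X,r}$ is dense) that such a citation would unpack. One cosmetic remark: when you assert that $\mathcal U_{X,r}$ contains a dense open subset, you only use density of $\mathcal U_{X,r}$ in the sequel, so you could drop the ``open'' claim and avoid the (easy but unstated) constructibility step.
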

\begin{proof} See \cite[Prop. 2.2]{Blomenhofer_Casarotti_2023}. 
\end{proof}

\begin{remark}
	The apex space is a linear space satisfying $ \apex(X) \subseteq X $. 
	If $ V $ is a $ G $-module and $ X $ is $ G $-invariant, then $ \apex(X) $ has a natural induced structure of $ G $-submodule of $ V $. 
\end{remark}

\subsection{Defective vector bundles and their apices}

\begin{defi}
	An embedded vector bundle is a quadruple $ (E, X, \pi, V) $ comprised of a quasi-affine cone $ X $, a finite-dimensional linear space $ V $, a Zariski-locally trivial algebraic map $ \pi\colon E\to X $ and a subset $ E\subseteq V $ such that $ \pi^{-1}(\{x\}) $ is a linear subspace of $ V $ for every $ x\in X $. We denote with $ E_x := \pi^{-1}(\{x\}) $ for brevity. The (vector) spaces $ E_x $ are called the \emph{fibers} of $ E $.
\end{defi}

In the context of varieties $X$ equipped with a $G$-action and vector bundles embedded into a $G$-module $V$, we give the following definition:

\begin{defi} Let $ E $ be a vector bundle on a $ G $-invariant variety $ X $, embedded into a $ G $-module $ V $. Then, $ E $ is called 
	$ G $-equivariant, if $ E_{gx} = g E_x $ for all $ g\in G $.
\end{defi}

\begin{prop} 
	If $ X = X_1\cup \ldots \cup X_k $ is the irreducible decomposition of $ X $, then the dimension of $ E_x $ is constant on each irreducible component. We call these dimensions $ (N_1,\ldots,N_k) $ the ranks of $ E $. Moreover if $X_i \cap X_j \neq \{0\}$ then $N_i=N_j$. 
\end{prop}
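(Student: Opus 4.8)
The plan is to deduce both claims directly from the one structural hypothesis built into the notion of an embedded vector bundle: that $\pi\colon E\to X$ is Zariski-locally trivial.

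The first step is the elementary observation that $x\mapsto\dim_\C E_x$ is a locally constant function on $X$. Indeed, by local triviality every $x\in X$ has a Zariski-open neighbourhood $U\subseteq X$ together with an isomorphism $\pi^{-1}(U)\cong U\times\C^N$ of bundles over $U$, for some $N\in\N_0$; restricting this isomorphism to a single fibre identifies the linear subspace $E_y\subseteq V$ with $\C^N$ for every $y\in U$, so $\dim E_y=N=\dim E_x$ throughout $U$.

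The second step is to fix an irreducible component $X_i$ and restrict this function to $X_i$ with its subspace topology. For $x\in X_i$ the set $U\cap X_i$ is an open neighbourhood of $x$ in $X_i$ on which $\dim E$ is constant, so $x\mapsto\dim E_x$ is locally constant on $X_i$; since $X_i$ is irreducible it is connected, and an $\N_0$-valued locally constant function on a connected space is constant. Denote its value by $N_i$. This proves the first assertion and defines the ranks $(N_1,\dots,N_k)$.

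For the last claim, assume $X_i\cap X_j\neq\{0\}$ and pick any point $x$ lying in this intersection. Applying the previous paragraph once with the component $X_i$ and once with $X_j$ yields $\dim E_x=N_i$ and $\dim E_x=N_j$ simultaneously, hence $N_i=N_j$. I do not expect a genuine obstacle here; the only points requiring a moment's care are that ``Zariski-locally trivial'' must be read in the strong sense of triviality over a neighbourhood of \emph{every} point of $X$ (so that local constancy is available also at singular points and at points lying on several components, not merely over a dense open subset of each $X_i$), and that the hypothesis $X_i\cap X_j\neq\{0\}$ is exactly what guarantees the existence of the common point $x$ that the argument actually uses.
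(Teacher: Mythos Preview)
Your argument is correct. The paper does not actually prove this proposition: it simply remarks that this is a standard fact in algebraic geometry and refers the reader to \cite[III.5.8]{Hartshorne}. Your proof spells out the underlying reason---local triviality forces $x\mapsto\dim E_x$ to be locally constant, hence constant on each irreducible (and therefore connected) component, and a common point of two components forces their ranks to agree---so it is more self-contained than the paper's treatment, while of course being the same argument one would extract from the cited reference.
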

\begin{proof}
This is a standard general fact in algebraic geometry. For a proof see for instance \cite[III.5.8]{Hartshorne}
\end{proof}

\begin{defi}
	An embedded vector bundle $ E $ of rank $ N $ on an irreducible variety $ \P(X) $ is called $ r $-nondefective, if for general $ x_1,\ldots,x_r \in X $, the dimension of $ \langle E_{x_1},\ldots,E_{x_r} \rangle $ equals $ r N $. 
\end{defi}

Note that when $E$ is the embedded tangent bundle of $X$ then the notion of $r$-defectivity reduces to the classical one.

For reducible varieties, the definition is slightly more involved, as the correct dimension depends on how many $ x_i $ are chosen from each irreducible component. This motivates the following definition. 

\begin{defi}
	We say that a sequence of points $ x = (x_1,\ldots,x_r) $ with $ x_i\in X $ is of \emph{(component) type} $ \alpha \in \N_0^k $, if $ \# \{j \in \{1,\ldots,r\} \mid x_j\in X_i\} = \alpha_i $ and no $ x_i $ lies in the intersection of two components. 
\end{defi}
Note that if in the above definition, it holds $ |\alpha| = r $. We call a sequence $ x = (x_1,\ldots,x_r)$ in $ X $ general of type $ \alpha $, if each $ x_j $ is a general point of one of the $ X_i $.    

\begin{defi}
	Let $ E $ be a $ V $-embedded vector bundle on a variety $ \P(X) $ with components $ X = X_1\cup \ldots \cup X_k $ and respective ranks $ (N_1,\ldots,N_k) $. Let $ x_1,\ldots,x_r $ be a general sequence of type $ \alpha \in \N_0^k $ in $ X $. 
	\begin{enumerate}
		\item $ E $ is called $ \alpha $-\emph{nondefective}, if the dimension of $ \langle E_{x_1},\ldots,E_{x_r} \rangle  $ is equal to $ \alpha_1N_1 + \ldots + \alpha_k N_k $. 
		\item $ E $ is called $ \alpha $-\emph{filling}, if  $ \langle E_{x_1},\ldots,E_{x_r} \rangle = V$. 
		\item We say that $ E $ is \emph{of $ \alpha $-expected dimension}, if the dimension of $ \langle E_{x_1},\ldots,E_{x_r} \rangle $ equals $ \min\{\alpha_1N_1 + \ldots + \alpha_k N_k, \dim V\} $.
	\end{enumerate}
	Additionally, $ E $ is called $ r $-\emph{nondefective}, if $ E $ is $ \alpha $-nondefective for each $ \alpha \in \N_0^k $ with $ |\alpha| = r $. Note that $ E $ is of $ \alpha $-expected dimension, if $ E $ is either $ \alpha $-nondefective or $ \alpha $-filling.
\end{defi}

Let $ \mathcal{U}_{E, \alpha} =  \{x\in X^r \mid x \text{ has type } \alpha \text{ and } \langle E_{x_1},\ldots,E_{x_r} \rangle \text{ is of generic dimension} \} $. Note that the generic dimension of $ \langle E_{x_1},\ldots,E_{x_r} \rangle $ is at most $ \alpha_1 N_1 + \ldots + \alpha_k N_k $, but it might be less. The generic dimension is well-defined for sequences of type $ \alpha $.

\begin{defi}
	We define the $ \alpha $-apex of the bundle $ E $ to be
	\begin{align*}
		\apex_{\alpha}(E) :=  \bigcap_{(x_1,\ldots,x_m) \in \mathcal{U}_{E, \alpha}} \langle E_{x_1}, \ldots, E_{x_m} \rangle 
	\end{align*}
\end{defi}
The apex space is a technical tool for the proof of our main result. For the special case where $ E = T $ is the tangent bundle of an irreducible variety $ X $, the $ (r) $-apex space is classically known as the vertex of $\sigma_r(X)$, see for instance the seminal book \cite{zak1993tangents}. See also \cite[Preliminaries]{Blomenhofer_Casarotti_2023} for more detailed explanation. In the more general setting of irreducible varieties, it is (for technical reasons) useful to consider \emph{partial} apices, where the intersection runs only over part of the sequence, while other points are fixed. The precise definition follows. 

\begin{defi}\label{def:partial-apex}
	Let $x=(x_1,\dots,x_r)$ be a sequence of points in $ X $ and $ \alpha\in \N_0^n $. Write $ s = |\alpha| $. We define the partial $\alpha$-apex of the bundle $E$ at $ x $ to be
	\begin{align*}
		\apex_{\alpha}^{x}(E) :=  \bigcap_{(y_1,\ldots,y_s) \in \mathcal{U}_{E, \alpha}} \langle E_{x_1}, \ldots, E_{x_r}, E_{y_1},\ldots,E_{y_s} \rangle,
	\end{align*}
	where $ \mathcal{U}_{E, \alpha} $ denotes the set of $ \alpha $-type sequences $ y $ in $ X $, for which the space $ \langle E_{x_1}, \ldots, E_{x_r}, E_{y_1},\ldots,E_{y_s} \rangle $ has the generic dimension. 
\end{defi}
Note that the partial apex $ \apex_{\alpha}^{x}(E) $ depends on the points $x_{1},\dots,x_{r}$, which are fixed in the intersection. However, for general $ x $ of fixed component type $ \beta $, the dimension of $ \apex_{\alpha}^{x}(E) $ is of course constant. 
\begin{lemma}\label{lem:apex-lemma}
	Let $ \mathcal{U} $ a dense open subset of $ \mathcal{U}_{E, \alpha} $. Write $ m = |\alpha| $. Then,
	\begin{align*}
		\apex_{\alpha}(E) = \bigcap_{(x_1,\ldots,x_m) \in \mathcal{U}} \langle E_{x_1}, \ldots, E_{x_m} \rangle.
	\end{align*}
\end{lemma}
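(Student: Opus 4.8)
The inclusion $\apex_{\alpha}(E)\subseteq\bigcap_{(x_1,\ldots,x_m)\in\mathcal{U}}\langle E_{x_1},\ldots,E_{x_m}\rangle$ is immediate, since $\mathcal{U}\subseteq\mathcal{U}_{E,\alpha}$ and intersecting over a smaller index set can only enlarge the resulting space. For the reverse inclusion the plan is as follows. Fix a vector $v\in V$ and consider the locus
\[
  S_v:=\{x\in\mathcal{U}_{E,\alpha}\mid v\in\langle E_{x_1},\ldots,E_{x_m}\rangle\}.
\]
The key claim is that $S_v$ is Zariski-closed in $\mathcal{U}_{E,\alpha}$. Granting this, suppose $v\in\bigcap_{x\in\mathcal{U}}\langle E_{x_1},\ldots,E_{x_m}\rangle$. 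Then $\mathcal{U}\subseteq S_v$; since $S_v$ is closed and $\mathcal{U}$ is dense in $\mathcal{U}_{E,\alpha}$, the closure of $\mathcal{U}$ inside $\mathcal{U}_{E,\alpha}$ — namely all of $\mathcal{U}_{E,\alpha}$ — is contained in $S_v$. Thus $v\in\langle E_{x_1},\ldots,E_{x_m}\rangle$ for every $x\in\mathcal{U}_{E,\alpha}$, i.e.\ $v\in\apex_{\alpha}(E)$. This yields $\bigcap_{x\in\mathcal{U}}\langle E_{x_1},\ldots,E_{x_m}\rangle\subseteq\apex_{\alpha}(E)$ and hence the lemma.

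To prove that $S_v$ is closed, I would first reduce to a local statement. The ``type $\alpha$'' locus in $X^{m}$ is the finite union, over all maps $\iota\colon\{1,\ldots,m\}\to\{1,\ldots,k\}$ with $|\iota^{-1}(i)|=\alpha_i$, of the locally closed sets $\{x\mid x_j\in X_{\iota(j)}\setminus\bigcup_{i\neq\iota(j)}X_i\text{ for all }j\}$, so it suffices to check closedness of $S_v$ on each such piece. Fixing one, I would cover $X$ by open subsets $U$ over which $\pi\colon E\to X$ is trivial, so that $E_x=\langle s_1(x),\ldots,s_{N}(x)\rangle$ for regular sections $s_l$; then on a product of such opens the span $\langle E_{x_1},\ldots,E_{x_m}\rangle$ is the column span of a matrix $M(x)$ whose entries are regular functions of $x$. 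Writing $D$ for the (by hypothesis well-defined) generic value of $\dim\langle E_{x_1},\ldots,E_{x_m}\rangle$ on sequences of type $\alpha$, lower semicontinuity of $\rk M(x)$ together with the fact that $D$ is the generic — hence maximal — rank shows that $\mathcal{U}_{E,\alpha}$ meets this piece in the open locus $\{\rk M(x)=D\}$. On that locus one has $v\in\langle E_{x_1},\ldots,E_{x_m}\rangle$ if and only if $\rk[\,M(x)\mid v\,]\le D$, which is the simultaneous vanishing of all $(D+1)\times(D+1)$ minors of the augmented matrix $[\,M(x)\mid v\,]$ — a closed condition. Hence $S_v$ is closed, as claimed.

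The argument is essentially formal once the framework is set up; the only points requiring care are the bookkeeping for the component type $\alpha$ (decomposing the type-$\alpha$ locus according to which point lies on which irreducible component, and invoking the standing assumption that the generic dimension is well defined on it) and the passage to a local algebraic trivialization of $E$, which uses that $\pi$ is Zariski-locally trivial. With these in place, the ``dense subset inside a closed condition'' principle supplies the conclusion, and I anticipate no serious obstacle.
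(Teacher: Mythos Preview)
Your argument is correct and follows the same essential strategy as the paper: show that for fixed $v$ the membership condition ``$v\in\langle E_{x_1},\ldots,E_{x_m}\rangle$'' is Zariski-closed on $\mathcal{U}_{E,\alpha}$, then use density of $\mathcal{U}$. The only difference is packaging: the paper encodes the $R$-dimensional span by its Pl\"ucker point $t(x)\in\bigwedge^{R}V$ and tests membership via $p\wedge t(x)=0$, which compresses your local-trivialization and minor computations into a single polynomial identity vanishing on $\mathcal{U}$ and hence, by continuity, on all of $\mathcal{U}_{E,\alpha}$.
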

\begin{proof}
	The inclusion from left to right is clear. Now, let $ p $ be contained in the right hand side. Then, $ p $ lies in $ \langle E_{x_1}, \ldots, E_{x_r} \rangle  $ for all $ x = (x_1,\ldots,x_r) \in \mathcal{U} $. Let $ R $ denote the generic dimension of $ \langle E_{x_1}, \ldots, E_{x_r} \rangle  $. For all $ x\in \mathcal{U}_{E, \alpha} $, by the Plücker embedding, we can represent $ \langle E_{x_1}, \ldots, E_{x_r} \rangle  $ as a projective equivalence class of an element $ t(x_1,\ldots,x_r) \in \bigwedge^{R}(V) $, where $ t $ depends polynomially on $ (x_1,\ldots,x_r) $. Define $ f(p, x) = p\wedge t(x_1,\ldots,x_r) $. Clearly, $ f(p, x) $ is the zero tensor for all $ x\in \mathcal{U} $. By continuity, it is constantly zero. But for $ x\in \mathcal{U}_{E, \alpha} $, the identity $ p\wedge t(x_1,\ldots,x_r) = 0  $ implies that $ p\in  \langle E_{x_1}, \ldots, E_{x_r} \rangle $.
\end{proof}
\noindent
An analogous statement to \Cref{lem:apex-lemma} does of course hold for the partial apex.

\section{The rectangular stationarity lemma}\label{sec:stationarity-lemma-rectangular}
We consider an affine cone $ X $, which is embedded in some $ G $-module and which is closed under the $ G $-action. Let $ X = X_1\cup \ldots \cup X_k $ denote the decomposition of $ X $ into irreducible components. Furthermore, we consider a $ G $-module $ V $ and a $ G $-equivariant vector bundle $ \pi\colon E\to X $ on $ X $, such that all fibers $ E_x $ are embedded in $ V $. We denote the ranks of $ E $ on $ X_1,\ldots,X_k $ by $ N_1,\ldots,N_k $. 
Let $ \alpha \in \N_0^k $ and let $ x = (x_1,\ldots,x_{|\alpha|}) $ be a general sequence in $ X $ of type $ \alpha $. Let $ y\in X_i $. In this section, we will consider sequences of the form 
\begin{align*}
	a_{\alpha, i} := \dim \langle E_{x_1},\ldots,E_{x_{|\alpha|}} \rangle \cap E_{y}
\end{align*}
Note that $ 0 \le a_{\alpha, i} \le N_i$, and $ a_{\alpha, i} $ only depends on the choice of $ \alpha $ and $ i $, as the dimension is the same for all general sequences $ x $ of type $ \alpha $ and all general $ y\in X_i $. A bit more generally, we may also consider the sequences 
\begin{align}
	b_{\alpha, \beta, i} := \dim \apex_{\alpha}^{x}(E) \cap E_{y}, 
\end{align}
where $ x $ is a sequence in $ x $ of type $ \beta $, $ y\in X_i $ and $ \apex_{\alpha}^{x}(E) $ denotes the partial apex introduced in \Cref{def:partial-apex}. Clearly, we also have that $ 0 \le b_{\alpha, \beta, i} \le N_i$ and $  b_{\alpha, \beta, i}  $ only depends on the choice of $ \alpha, \beta\in \N_0^k $ and $ i\in \{1,\ldots,k\} $. 

\medskip
\noindent
We start with a technical fact, which makes an important statement about stationarity points of the sequences $ a_{\alpha, i} $ and $ b_{\alpha, \beta, i} $. Note that both sequences are monotonic, in the sense that if $ \alpha\preceq \alpha' $ and $ \beta\preceq \beta' $, then $ a_{\alpha, i} \le a_{\alpha', i}$ and $ b_{\alpha, \beta, i} \le b_{\alpha', \beta', i}$. Here, $ \preceq $ denotes the entrywise semiordering of multi-indices. Via the stationarity lemma, we will establish that the sequences are ``usually'' either constantly zero, or they are strictly monotonically increasing. In other words, stationarity points are ``rare''.  
The stationarity lemmata proved in this work are a generalization of \cite{Blomenhofer_Casarotti_2023}, where it is showed that for irreducible varieties $ X $ (corresponding to $ k = 1 $), the only stationarity points can be at ``$ a_{re_1, 1} = 0 $'' or ``$ a_{re_1} = N_1 $'', where $ e_1 = (1, 0,\ldots,0) $ denotes the first standard basis vector. For higher $ k $, the statement is more involved. We develop it in the following. 

\medskip
\noindent
Fix $ j = 1,\ldots,k $ and consider the sequence $ b_s := b_{\alpha, \beta + s e_j, i }$. Note that $ 0\le b_{0} \le b_{1} \le\ldots $, since clearly $ \apex_{\alpha}^{x}(E) $ can only get bigger when adding $ s $ additional points from $ X_j $ to the sequence $ x $. W.l.o.g., assume that $ \beta_j = 0 $, as otherwise we can shift the sequence $ b_s $. We now state the stationarity lemma for partial apices. 

\begin{lemma}[Stationarity Lemma]\label{lem:stationarity-lemma-rectangular}
	Let $ z_1,\ldots,z_s \in X_j$ be general points and let $ x = (x_{1},\ldots,x_{r}) $ be a sequence of generic points of type $ \beta $, where $ r = |\beta| $. If $ 0\ne b_s = b_{s+1} $, then $ \apex_{\alpha}^{(x, z)}(E) \cap E_y = \apex_{\alpha+se_j}^{x}(E) $. 
\end{lemma}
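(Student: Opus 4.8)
The plan is as follows. The content of the lemma is, in essence, that under the stationarity hypothesis $0\neq b_s=b_{s+1}$ the subspace $\apex_{\alpha}^{(x,z)}(E)\cap E_y$ is independent of the choice of the general points $z=(z_1,\ldots,z_s)\in X_j$. Since, by \Cref{lem:apex-lemma} and its partial-apex analogue, $\apex_{\alpha+se_j}^{x}(E)$ can be written as $\bigcap_{z'}\apex_{\alpha}^{(x,z')}(E)$ with $z'$ ranging over general $s$-tuples in $X_j$, this independence immediately yields the desired identity once one intersects with $E_y$ (the inclusion $\apex_{\alpha+se_j}^{x}(E)\cap E_y\subseteq\apex_{\alpha}^{(x,z)}(E)\cap E_y$ being clear, since our fixed $z$ is one of the admissible $z'$). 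So I would first record these two ``soft'' facts about partial apices: that enlarging the fixed part of a partial apex only enlarges the apex, and the displayed representation of $\apex_{\alpha+se_j}^{x}(E)$ as an intersection over general $z'$; both follow from \Cref{lem:apex-lemma} applied to the partial apex, the only mild care going into matching up the various ``generic dimension'' loci.

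The core of the argument is a one-point exchange step. Let $u$ be a general sequence of type $\beta+(s-1)e_j$ (so $|u|=r+s-1$) and let $v,v'\in X_j$ be general points, independent of $u$ and of each other. By the enlargement fact, $\apex_{\alpha}^{(u,v)}(E)$ and $\apex_{\alpha}^{(u,v')}(E)$ are both contained in $\apex_{\alpha}^{(u,v,v')}(E)$, hence so are their intersections with $E_y$. By genericity, $\dim\big(\apex_{\alpha}^{(u,v)}(E)\cap E_y\big)=\dim\big(\apex_{\alpha}^{(u,v')}(E)\cap E_y\big)=b_s$, while $\dim\big(\apex_{\alpha}^{(u,v,v')}(E)\cap E_y\big)=b_{s+1}$, and the hypothesis $b_s=b_{s+1}$ forces all three subspaces to be equal. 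In particular $\apex_{\alpha}^{(u,v)}(E)\cap E_y=\apex_{\alpha}^{(u,v')}(E)\cap E_y$: the intersection is insensitive to the last point of the sequence.

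Next comes the interpolation. Because $X_j$ is irreducible, a general tuple of points of $X_j$ stays in the generic locus under arbitrary permutations, so ``insensitive to the last point'' upgrades to ``insensitive to any one point, with the others held general''. Fixing two tuples $z=(z_1,\ldots,z_s)$ and $w=(w_1,\ldots,w_s)$ with all $2s$ points independently general in $X_j$, I would exchange $z_1$ for $w_1$, then $z_2$ for $w_2$, and so on, applying the exchange step at the $\ell$-th stage with $u=(x,w_1,\ldots,w_{\ell-1},z_{\ell+1},\ldots,z_s)$; each such $u$ is again a general sequence of type $\beta+(s-1)e_j$, so the hypothesis remains available at every step. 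After $s$ exchanges one gets $\apex_{\alpha}^{(x,z)}(E)\cap E_y=\apex_{\alpha}^{(x,w)}(E)\cap E_y$, and since $w$ was an arbitrary general $s$-tuple, intersecting over all of them and invoking the representation from the first paragraph gives $\apex_{\alpha}^{(x,z)}(E)\cap E_y=\apex_{\alpha+se_j}^{x}(E)\cap E_y$, as claimed.

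The genuinely substantive step is the dimension count in the exchange lemma --- two subspaces, each sitting inside a common third subspace of the same dimension, must coincide --- and that part is short. I expect the main obstacle to be entirely bookkeeping: tracking the ``generic dimension'' clauses through the manipulations of partial apices in the first paragraph (so that each step is actually licensed by \Cref{lem:apex-lemma} and its partial analogue), and keeping the genericity of the intermediate base sequences $u$ under control through the chain of exchanges --- which is precisely why I would take the $2s$ exchange points to be independently general from the outset.
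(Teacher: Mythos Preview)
Your proposal is correct and follows essentially the same route as the paper: a one-point exchange via the containment $\apex_{\alpha}^{(u,v)}(E)\cap E_y\subseteq\apex_{\alpha}^{(u,v,v')}(E)\cap E_y$ together with the dimension equality $b_s=b_{s+1}$, iterated to swap out all $s$ points, followed by the identification of $\apex_{\alpha+se_j}^{x}(E)$ with the intersection over general $z'$ via \Cref{lem:apex-lemma}. Your write-up is in fact a bit more explicit than the paper's about the bookkeeping (the enlargement fact, keeping intermediate sequences general by choosing all $2s$ exchange points independently general), and you correctly state the conclusion with the extra $\cap E_y$ on the right-hand side, which appears to be a typo in the paper's statement.
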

\begin{proof}
	Let $ y $ be a general point in $ X_i $. We define 
	\begin{align*}
		L(x, z)  &= \apex_{\alpha}^{(x, z)}(E)
	\end{align*}
	Consider another generic sequence $ z' = (z_1',\ldots,z_s') \in X_j^s$. \\
	\textbf{Step 1:} We are to show that $ L(x, z) \cap E_{y} = L(x, z') \cap E_{y} $. \\
	Indeed, observe that $ L(x, z) \cap E_y$ is a subspace of $ L(x, (z, z_1')) \cap E_y $ and both have the same dimension: Indeed, $  L(x, z) \cap E_y $ has dimension $ b_s $ by assumption and $ L(x, (z, z_1')) \cap E_y  $ has dimension $ b_{s+1} $, since $ (z, z_1') $ is a general sequence of $ s+1 $ points in $ X_j $. Since $ b_s = b_{s+1} $, therefore, the two spaces must be equal. 
	
	Continuing with the same argument, we also have that $ L(x, (z_1', z_2,\ldots,z_s)) \cap E_y $ is a subspace of $ L(x, (z, z_1')) \cap E_y $ and both have the same dimension. Summarized, we showed that
	\begin{align*}
			L(x, z) \cap E_y = L(x, (z, z_1')) \cap E_y = L(x, (z_1', z_2,\ldots,z_s)) \cap E_y
	\end{align*}
	In other words, we showed that the intersection $ L(x, z)\cap E_y $ does not change, if we switch out $ z_1 $ by another general point $ z_1' $ from $ X_j $. By repeating the argument inductively, we can switch $ z_1,\ldots,z_s $ with $ z_1',\ldots,z_s' $ and consequentially we see that $ L(x, z)\cap E_y = L(x, z')\cap E_y $. 

	\textbf{Step 2:} We showed in Step 1 that there is a Zariski dense open subset $ \mathcal{U}\subseteq X_j^k $ such that for all $ z'\in \mathcal{U} $, $ L(x, z) \cap E_y = L(x, z')\cap E_y$. This implies that   
	\begin{align*}
		L(x, z) \cap E_y = \bigcap_{(z_1',\ldots,z_s') \in \mathcal{U}} L(x, z') \cap E_y = \apex_{\alpha+se_j}^{x}(E).
	\end{align*}
	This is exactly the statement that was claimed. 
\end{proof}

The proof of our main result is obtained by induction over the number $ k $ of irreducible components of $ X $. The case $ k = 1 $ was proven in \cite{Blomenhofer_Casarotti_2023}. 
For the reader's convenience, we give a separate proof for the case where $ X = X_1 \cup X_2 $ has two irreducible components in the following \Cref{thm:2-factors}. This is not strictly necessary, since one could also use the main result of \cite{Blomenhofer_Casarotti_2023} as the base case for the induction. However, it makes the proof more digestible. The general induction will be given thereafter in \Cref{thm:k-factors}. 

\begin{theorem}\label{thm:2-factors}
	Let $ V $ an irreducible $ G $-module and $ E $ a $ G $-invariant $ V $-embedded vector bundle on $ X $, where $ X $ has components $ X_1, X_2 $ and $ E $ has rank $ N_i $ on $ X_i $ for $ i = 1,2 $ and $ N_1\ge N_2 $.  Assume that $ \alpha\in \N_0^2 $ is such that $ \alpha_1 < \frac{\dim V}{N_1} - N_1 $ and 
	\begin{align}\label{eq:dimension-constraint-two-components}
		\alpha_1 N_1 + \alpha_2 N_2 + N_1 N_2 < \dim V. 
	\end{align}
	Then, $ E $ is $ \alpha $-nondefective. 
\end{theorem}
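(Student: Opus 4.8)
The plan is to reduce to a ``nondefective or filling'' dichotomy, eliminate the filling alternative via the dimension constraint, and establish the dichotomy by a stationarity-lemma induction modelled on the irreducible case of \cite{Blomenhofer_Casarotti_2023}. For the reduction, note that \eqref{eq:dimension-constraint-two-components} in particular gives $\alpha_1 N_1 + \alpha_2 N_2 < \dim V$; since a span $\langle E_{x_1},\dots,E_{x_{|\alpha|}}\rangle$ of a type-$\alpha$ sequence has dimension at most $\alpha_1 N_1 + \alpha_2 N_2$, the bundle $E$ cannot be $\alpha$-filling. As $E$ is of $\alpha$-expected dimension precisely when it is $\alpha$-nondefective or $\alpha$-filling, it suffices to prove that $E$ is of $\alpha$-expected dimension.

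I would prove, by induction on $|\beta|$, that $E$ is of $\beta$-expected dimension for every $\beta \preceq \alpha$; the case $\beta = \alpha$ then finishes the proof. The case $\beta = 0$ is trivial, and for the ``pure'' indices $\beta = (\beta_1,0)$ and $\beta = (0,\beta_2)$ one applies the irreducible result of \cite{Blomenhofer_Casarotti_2023} to $X_1$, respectively $X_2$: the hypothesis $\alpha_1 < \frac{\dim V}{N_1}-N_1$ is precisely the numerical condition guaranteeing that $E|_{X_1}$ is $\beta_1$-nondefective for all $\beta_1 \le \alpha_1$, while $\alpha_2 N_2 + N_2^2 \le \alpha_1 N_1 + \alpha_2 N_2 + N_1 N_2 < \dim V$ (using $N_1 \ge N_2$) gives the analogous statement for $E|_{X_2}$. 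For a ``mixed'' index $\beta = (\beta_1,\beta_2)$ with $\beta_1,\beta_2 \ge 1$, write $\beta = \gamma + e_2$ with $\gamma = (\beta_1,\beta_2-1)$; by the inductive hypothesis $E$ is of $\gamma$-expected dimension, and it is not $\gamma$-filling since $\gamma_1 N_1 + \gamma_2 N_2 < \dim V$, so for general $x$ of type $\gamma$ one has $\dim\langle E_{x_1},\dots,E_{x_{|\gamma|}}\rangle = \gamma_1 N_1 + \gamma_2 N_2$. If $E$ were not of $\beta$-expected dimension then, not being $\beta$-filling, it would be $\beta$-defective, i.e. $a_{\gamma,2} = \dim(\langle E_{x_1},\dots,E_{x_{|\gamma|}}\rangle \cap E_y) =: c > 0$ for general $y \in X_2$.

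It remains to derive a contradiction from $c > 0$, and this is where \Cref{lem:stationarity-lemma-rectangular} enters. The sequence $s \mapsto a_{\gamma + se_2,2}$ is monotone and bounded by $N_2$, hence has a stationarity point at some nonzero value, and the lemma turns this into an equality of partial apices. One then enlarges the base sequence in the $X_1$-direction, from type $(\gamma_1,\gamma_2)$ to $(\gamma_1+t,\gamma_2)$: because the pure sequence $a_{(t,0),1}$ never gets stuck below $N_1$ for $t \le \alpha_1$ --- this is the role of the first hypothesis --- these enlargements act generically, the partial apices stabilize, and at stabilization they coincide with the global apex $\apex_{\alpha}(E)$, which inherits from $c > 0$ the property of being nonzero. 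Since $E$ is $G$-equivariant, $X$ is $G$-invariant and $V$ is an irreducible $G$-module, $\apex_{\alpha}(E)$ is a $G$-submodule of $V$, hence equals $V$; but $\apex_{\alpha}(E) = \bigcap_{x}\langle E_{x_1},\dots,E_{x_{|\alpha|}}\rangle$ over general type-$\alpha$ sequences $x$, so $\apex_{\alpha}(E)=V$ forces $E$ to be $\alpha$-filling, contradicting the reduction. In this bookkeeping the second hypothesis \eqref{eq:dimension-constraint-two-components} is the crucial one: a stuck step in a mixed direction costs at most $N_1 N_2$ in dimension before it forces filling, so $\alpha_1 N_1 + \alpha_2 N_2 + N_1 N_2 < \dim V$ is exactly what prevents the contradiction from being evaded.

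The main obstacle is this last step: converting a single stationarity point of a mixed sequence $a_{\gamma+se_2,2}$ into a statement about the global apex $\apex_{\alpha}(E)$ by iterating \Cref{lem:stationarity-lemma-rectangular} and combining it with the irreducible input along $X_1$, while tracking the dimension loss precisely enough to see that the two numerical hypotheses are exactly the ones required. This is the two-component instance of the $k$-fold induction carried out in \Cref{thm:k-factors}, which is why a separate, more transparent treatment of the case $k=2$ is worthwhile.
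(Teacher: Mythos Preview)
Your overall strategy is the paper's: assume $\alpha$-defectivity, apply the stationarity lemma first in the $X_2$-direction and then in the $X_1$-direction to produce a nonzero full apex, invoke irreducibility of $V$ as $G$-module to force filling, and contradict the dimension bound. But there is a genuine gap, and it sits exactly at the step you flag as ``the main obstacle''.

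You assert that after iterating the stationarity lemma the partial apices ``coincide with the global apex $\apex_\alpha(E)$''. They do not. If $\hat s$ is a stationarity point of the $X_2$-sequence and $\hat t$ one of the subsequent $X_1$-sequence $b_t$, what \Cref{lem:stationarity-lemma-rectangular} yields is $\apex_{(\alpha_1+\hat t,\,\hat s)}(E)\cap E_y\neq 0$, hence $E$ is $(\alpha_1+\hat t,\hat s)$-filling; and this index has no reason to equal $\alpha$ --- it may exceed $\alpha$ in either coordinate. The paper never returns to $\alpha$; instead it \emph{bounds the overshoot}. Both the $a$-sequence and the $b$-sequence take values in $\{1,\dots,N_2\}$ and are strictly increasing up to their stationarity points, with $b_0=a_{\hat s}$, so the combined number of steps is at most $N_2-1$. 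This gives filling at some $\beta$ with $\beta_1 N_1+\beta_2 N_2\le \alpha_1 N_1+\alpha_2 N_2+(N_2-1)N_1<\dim V$ by \eqref{eq:dimension-constraint-two-components}, the desired contradiction. Your final paragraph gestures at this (``costs at most $N_1N_2$ in dimension''), but that is inconsistent with your earlier explicit claim about $\apex_\alpha(E)$, and without the strict-monotonicity count the argument does not close.

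Two further remarks. Your induction on $\beta\preceq\alpha$ is not needed and does not help: the stationarity steps may leave the box $\{\beta\preceq\alpha\}$, so knowing nondefectivity inside it is irrelevant to bounding where they land. And the role you assign to the first hypothesis --- keeping the pure sequence $a_{(t,0),1}$ ``unstuck below $N_1$'' during the second stationarity step --- is not how it is used; it serves solely to secure the base case $(\alpha_1,0)$-nondefectivity (so that a first defect along $e_2$ exists), while the $b_t$-sequence in the paper runs over all $t\ge 0$ with no a priori cap at $\alpha_1$.
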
 
\begin{proof}
	Observe that $ E $ cannot be $ (\alpha_1, 0) $-defective: Indeed, this is covered by an earlier result on bundles on irreducible varieties (see \cite[Corollary 3.5]{Blomenhofer_Casarotti_2023}). 
	Therefore, there must exist $ s_0 \in \N_0 $ such that $ E $ is $ (\alpha_1, s_0) $-nondefective, but $ E $ is $ (\alpha_1, s_0+1) $ defective. As the defect is caused by adding a space $ E_y $, where $ y\in X_2 $, we have that $ a_{(\alpha_1, s_0), 2} \ne 0 $. In other words, $ \langle E_{x_1},\ldots,E_{x_{\alpha_1}}, E_{y_1},\ldots,E_{y_{s_0}} \rangle \cap E_y \ne \{0\}$ for all general $ x_1,\ldots,x_{\alpha_1}\in X_1 $ and general $ y_1,\ldots,y_{s_0}, y\in X_2 $. Consider now the sequence of values $ a_s := a_{ (\alpha_1, s), 2} $ for $ s\ge s_0 $. 
	Assume that $ s $ is a stationarity point of this sequence, i.e., $ a_s = a_{s+1} $. 
	Then, we obtain from \Cref{lem:stationarity-lemma-rectangular} that the partial apex, $\apex_{(0, s)}^{x}(E)$, as introduced in \Cref{def:partial-apex}, has a nontrivial intersection with $ E_y $: Indeed, \Cref{lem:stationarity-lemma-rectangular}, applied with ``$ i = j = 2 $'', ``$ (z_1,\ldots,z_s) = (y_1,\ldots,y_{s})$'' gives us that
	\begin{align}\label{eq:ExEx-partial-apex}
		\{0\} \ne \langle E_{x_1},\ldots,E_{x_{\alpha_1}}, E_{y_1},\ldots,E_{y_{s}} \rangle \cap E_y = \apex_{(0, s)}^{x}(E) \cap E_y, 
	\end{align}
	This shows that the partial apex, obtained from intersecting in \eqref{eq:ExEx-partial-apex} over all choices of $ y_1,\ldots,y_s $, intersects $ E_y $ in a nonzero space. (In particular, the partial apex $ \apex_{(0, s)}^{x}(E) $ is nonzero). 
	Define for $ t\in \N_0 $ the dimension
	\begin{align*}
		b_t := \dim \apex_{(0, s)}^{(x_1,\ldots,x_{\alpha_1+t})}(E) \cap E_y. 
	\end{align*}
	of the space on the right hand side for general $ x_1,\ldots,x_{\alpha_1+t}\in X_1 $. Note that this dimension is well-defined, i.e., it does not depend on the (general) choice of the $ t $ additional points from $ X_1 $. Also, note that $ b_0 = a_s $. 
	Furthermore, observe that $ b_0\le b_1\le b_2\ldots $, as the space $ \dim \apex_{(0, s)}^{(x_1,\ldots,x_{\alpha_1+t})}(E) \cap E_y.  $ clearly can only get larger by increasing $ t $. 
	Now, if $ t $ is a stationarity point of the sequence $ b_1\le b_2 \le \ldots $, then $ b_t = b_{t+1} $. Using again \Cref{lem:stationarity-lemma-rectangular}, we can show that 
	\begin{align*}	
		 \dim \apex_{(0, s)}^{(x_1,\ldots,x_{\alpha_1+t})}(E) \cap E_y = \apex_{(\alpha_1+t, s)}(E) \cap E_y. 
	\end{align*}
	In particular, we showed that in fact that the full, non-partial apex  $ \apex_{(\alpha_1+t, s)}(E) $ is nonempty. By \Cref{lem:apex-lemma}, the (full) apex is a $ G $-module. Since $ V $ is an irreducible $ G $-module, the apex can only be nonzero, if it equals $ V $. However, if $  \apex_{(\alpha_1+t, s)}(E) = V $, then in particular, $ E $ is $ (\alpha_1+t, s) $-filling. Therefore, the sequence $ (b_t)_{t\ge 0} $ is strictly monotonic, until $ E $ is $ (\alpha_1+t, s) $-filling.
	
	Let us now prove the statement of the theorem: To the contrary, assume that $ E $ was $ \alpha $-defective. Note that due to the dimension constraint, \eqref{eq:dimension-constraint-two-components}, $ E $ cannot be $ (\alpha + se_2) $-filling for any $ s\le N_2 $. The sequence $ s\mapsto a_{\alpha + se_2} $ has to be either strictly increasing for all $ 0 \le s \le N_2-1 $ or there will be a stationarity point $ \hat{s} $. In the first case, we see that $ E $ must be $ (\alpha+N_2-1) $-filling, which contradicts the dimension constraint, \eqref{eq:dimension-constraint-two-components}. 
	
	In the second case, we obtain a sequence $ t\mapsto b_t $ as described above. Observe that $ a_{\hat{s}} = b_0 $. Additionally, the sequence $ t\mapsto b_t $ cannot have a stationary point unless $ E $ is $ (\alpha+(t, \hat{s})) $-filling. Due to the dimension constraint from \eqref{eq:dimension-constraint-two-components}, it cannot be filling, if $ \hat{s}+t \le N_2 $ (since $ N_1\ge N_2 $, adding $ N_2 $ spaces can increase the expected dimension by at most $ N_1 N_2 $). Consequently, both sequences $ s\mapsto a_{\alpha + se_2} $ and $ t\mapsto b_t $ are strictly increasing, as long as $ s\le \hat{s} $ and $ t\le N_2-1-\hat{s} $. In particular, this means that $ a_{\hat{s}} \ge \hat{s}+1 $ and $ b_t \ge t+\hat{s}+1  $, if $ t\le N_2-1-\hat{s} $. 
	Therefore, we see that $ b_{N_2-1-\hat{s}} = N_2 $ and thus $ E $ has to be $ (\alpha + (\hat{s},  N_2-1-\hat{s})) $-filling. 
	This is a contradiction: $ E $ cannot be $ (\alpha + (\hat{s},  N_2-1-\hat{s})) $-filling  due to the dimension constraint, \eqref{eq:dimension-constraint-two-components}. 
	We conclude that $ E $ must be $ \alpha $-nondefective. 
\end{proof}

\noindent
The previous result may be generalized to $ k $ irreducible components as follows.

\begin{theorem}\label{thm:k-factors}
	Let $ V $ an irreducible $ G $-module and $ E $ a $ G $-invariant $ V $-embedded vector bundle on $ X $, where $ X $ has components $ X_1,\ldots,X_k $ and $ E $ has rank $ N_i $ on $ X_i $ for $ i = 1,\ldots,k $, w.l.o.g. ordered such that $ N_1\ge \ldots \ge N_k $.  Assume that $ \alpha\in \N_0^2 $ is such that 
	\begin{align}\label{eq:dimension-constraint-k-components}
		&\alpha_1 N_1 + N_1(N_1-1) < \dim V\\
		&\alpha_1 N_1 + \alpha_2 N_2 + N_1(N_2-1) < \dim V\nonumber\\
		&\qquad\qquad\qquad\vdots\nonumber \\
		&\alpha_1 N_1  + \ldots + \alpha_k N_k + N_1 (N_k-1) < \dim V.\nonumber  
	\end{align}
	Then, $ E $ is $ \alpha $-nondefective. 
\end{theorem}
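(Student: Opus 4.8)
The plan is to induct on the number $k$ of irreducible components of $X$. The base case $k=1$ is the nondefectivity criterion for bundles on irreducible varieties, \cite[Corollary 3.5]{Blomenhofer_Casarotti_2023} (equivalently one may start from $k=2$ and quote \Cref{thm:2-factors}). For the inductive step I would split $\alpha=(\alpha',\alpha_k)$ with $\alpha'=(\alpha_1,\dots,\alpha_{k-1})$ and note that the first $k-1$ inequalities of \eqref{eq:dimension-constraint-k-components} are precisely the hypotheses of the theorem for the restriction of $E$ to $X_1\cup\dots\cup X_{k-1}$, a variety with $k-1$ components and ranks $N_1\ge\dots\ge N_{k-1}$. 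The inductive hypothesis then gives that $E$ is $(\alpha',0)$-nondefective, so what remains is to pass from $(\alpha',0)$ to $(\alpha',\alpha_k)$ while preserving nondefectivity, using the one inequality of \eqref{eq:dimension-constraint-k-components} not yet consumed, namely the last one.

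I would do this by contradiction, running the mechanism of \Cref{thm:2-factors} but iterating it once per component. Suppose $E$ is $\alpha$-defective. Since $E$ is $(\alpha',0)$-nondefective, there is a first $s_0<\alpha_k$ at which adding a point from $X_k$ drops the dimension, so $a_{(\alpha',s_0),k}\ge 1$, and by monotonicity of $s\mapsto a_{(\alpha',s),k}$ also $a_{\alpha,k}=a_{(\alpha',\alpha_k),k}\ge 1$. Now grow the $X_k$-block past $\alpha_k$: the sequence $q\mapsto a_{(\alpha',q),k}$ ($q\ge\alpha_k$) is monotone and bounded by $N_k$, and it never reaches a filling configuration, since by the last inequality of \eqref{eq:dimension-constraint-k-components} the points added along the walk can contribute at most $N_1(N_k-1)<\dim V-\sum_i\alpha_iN_i$ to the dimension. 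Hence it has a stationarity point, and \Cref{lem:stationarity-lemma-rectangular} (with $i=j=k$, treating the whole $X_k$-block as the ``$z$''-sequence) replaces the span of these points by a partial apex $\apex^{x}_{qe_k}(E)$ (in the sense of \Cref{def:partial-apex}), with $x$ general of type $(\alpha_1,\dots,\alpha_{k-1},0)$, still meeting a general $E_y$, $y\in X_k$, nontrivially. Keeping this partial apex, I would next grow the number of $X_{k-1}$-points; the relevant intersection dimension is again monotone, bounded by $N_k$, non-filling, and has a stationarity point, so \Cref{lem:stationarity-lemma-rectangular} (now with $j=k-1$) absorbs the $X_{k-1}$-block into the apex as well. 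Iterating for $X_{k-2},\dots,X_1$, after $k$ rounds all points have been absorbed, producing a full apex $\apex_\gamma(E)$ with $\apex_\gamma(E)\cap E_y\ne\{0\}$. By \Cref{lem:apex-lemma} this is a $G$-submodule of $V$; since $V$ is an irreducible $G$-module, $\apex_\gamma(E)=V$, so $E$ is $\gamma$-filling.

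It remains to reach the contradiction. The tracked intersection dimension starts at $\ge 1$, strictly increases at every non-stationary step, and never exceeds $N_k$, so across all $k$ rounds at most $N_k-1$ points are added beyond the original $\alpha_i$ in each direction. Each such point lies in some $X_i$ and contributes at most $N_1$ to $\sum_i\gamma_iN_i$ (as $N_1\ge N_i$), whence $\sum_i\gamma_iN_i\le\alpha_1N_1+\dots+\alpha_kN_k+N_1(N_k-1)<\dim V$ by the last inequality of \eqref{eq:dimension-constraint-k-components}, which is incompatible with $E$ being $\gamma$-filling. So $E$ is $\alpha$-nondefective. I expect the main obstacle to be the bookkeeping of this rectangular walk: tracking the exact component-type of the configuration at each invocation of \Cref{lem:stationarity-lemma-rectangular}, verifying that the required genericity survives each absorption into a partial apex, and --- the delicate point --- checking that throughout the walk the added points contribute strictly less than $\dim V-\sum_i\alpha_iN_i$, so that no intermediate configuration is filling and the walk genuinely terminates in a full apex. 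Conceptually this is the same argument as the two-component case; the work is entirely in matching the constants of \eqref{eq:dimension-constraint-k-components} to the worst-case walk.
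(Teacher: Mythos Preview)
Your proposal is correct and follows essentially the same approach as the paper: induct on $k$, use the first $k-1$ inequalities to secure $(\alpha',0)$-nondefectivity, then walk through the components $X_k,X_{k-1},\dots,X_1$ applying \Cref{lem:stationarity-lemma-rectangular} at each stationarity point to absorb blocks into a partial apex, and conclude from the strictly increasing chain of intersection dimensions (bounded by $N_k$) that at most $N_k-1$ extra points are needed to reach a full apex, contradicting the last inequality. One small remark: your worry about intermediate configurations being filling is not actually needed---the stationarity lemma applies whenever the sequence stalls, and the contradiction is drawn only at the end from the full apex equaling $V$; the paper does not check non-filling along the walk either.
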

\begin{proof}
	Assume to the contrary that $E$ is $\alpha$-defective for some
	\begin{align*}
	\alpha=(\alpha_1,\ldots,\alpha_k)\in\mathbb{N}_0^k
	\end{align*}
	satisfying the chain of inequalities \eqref{eq:dimension-constraint-k-components}.
	By the induction hypothesis (applied to the union $X_1\cup\cdots\cup X_{k-1}$ with ranks $N_1\ge\cdots\ge N_{k-1}$), $E$ is $(\alpha_1,\ldots,\alpha_{k-1},0)$-nondefective. Hence there exists a minimal $s_0\in\mathbb{N}_0$ such that $E$ is $(\alpha_1,\ldots,\alpha_{k-1},s_0)$-nondefective but $(\alpha_1,\ldots,\alpha_{k-1},s_0+1)$-defective. Equivalently, 
	\begin{align*}
		a^{(k)}_{s} := a_{(\alpha_1,\ldots,\alpha_{k-1},s_0+s),\,k} \neq 0
	\end{align*}
	for $s\ge 0$. In other words, if we fix a general sequence $ x $ in $ X $ of type $ (\alpha_1,\ldots,\alpha_{k-1}, 0) $ and general $ y_1,\ldots,y_{s_0+s}\in X_k $, then
	\begin{align*}
		\langle E_{x_1},\ldots,E_{x_{r}}, E_{y_1},\ldots,E_{y_{s_0+s}}\rangle\cap E_y\neq\{0\}
	\end{align*}
	for $ s\ge 0 $. Here, we denote $ r := \alpha_1 + \ldots + \alpha_{k-1} $. 
	
	Consider the sequence $s\mapsto a^{(k)}_s$ for $s\ge 0$. Since $ 0 < a^{(k)}_{s} \le N_k $, we know that there must exist a stationarity point in the interval $ [s_0, s_0 + N_k-1] $. 
	If $\hat{s}$ is a stationarity point, so that $a^{(k)}_{\hat{s}}=a^{(k)}_{\hat{s}+1}$, then from \Cref{lem:stationarity-lemma-rectangular} we obtain a non-trivial \emph{partial apex}. Precisely, we obtain 
	\begin{align}\label{eq:first-partial-apex-k}
		\langle E_{x_1},\ldots,E_{x_{r}}, E_{y_1},\ldots,E_{y_{s_0+\hat{s}}}\rangle\cap E_y = \apex^{x}_{(\hat{s}+s_0)e_k}(E) \cap E_y \neq \{0\} 
	\end{align}
	for general $y\in X_k$. Now, let us split off those parts of the sequence $ x $, which lie in $ X_{k-1} $. Precisely, reorder $ x $ such that $ x = (x', z) $, where $ z_1,\ldots,z_{\alpha_{k-1}} $ are the  general points from $ x $ lying in $ X_{k-1} $ and where $ x' $ is a general sequence of type $ (\alpha_1,\ldots,\alpha_{k-2}, 0, 0) $. For $t\in\mathbb{N}_0$, we choose $ z_{\alpha_{k-1}+1},\ldots,z_{\alpha_{k-1} + t} $ as $ t $ additional general points from $ X_{k-1} $ and we define
	\begin{align*}
		a^{(k-1)}_t := \dim\bigl(\apex^{(x', z_1,\ldots,z_{\alpha_{k-1} + t})}_{(\hat{s}+s_0)e_k}(E) \cap E_y \bigr) > 0 
	\end{align*}  
	It follows from equation \eqref{eq:first-partial-apex-k} that $ a^{(k-1)}_0 = a^{(k)}_{\hat{s}} $. Furthermore, the sequence $t \mapsto a^{(k-1)}_t$ is nondecreasing, so it is either strictly increasing or it has a stationarity point at some $\hat{t}$.  Again by \Cref{lem:stationarity-lemma-rectangular} if $\hat{t}$ is a stationarity point, then we get a second non-zero partial apex 
	\begin{align*}
		\apex^{x'}_{(\alpha_{k-1}+\hat{t})e_{k-1}+(\hat{s}+s_0)e_k}(E) \cap E_y \neq 0,
	\end{align*}
	where $ x' $ is a sequence in $X_1 \cup \dots \cup X_{k-2}$. We continue the process by splitting off all points in $ x' $, which lie in $ X_{k-2} $. Precisely, reorder $ x' $ such that $ x' = (x'', w) $. Now, consider the new sequence $u \mapsto a_{u}^{(k-2)}$, where 
	\begin{align*}
		a_{u}^{(k-2)}:=\dim\bigl(\apex^{(x'', w_1,\ldots,w_{\alpha_{k-2} + u})}_{(\alpha_{k-1}+\hat{t})e_{k-1}+(\hat{s}+s_0)e_k}(E)\cap E_y\bigr) > 0 
	\end{align*}
	Inductively, we obtain a sequence of stationarity points $ \hat{s}, \hat{t}, \hat{u}, \ldots $. For the sake of simplicity, let us rename them to $ \hat{s}_1,\ldots,\hat{s}_k $, where $ \hat{s}_1 = \hat{s}, \hat{s}_2 = \hat{t}, \hat{s}_3 = \hat{u}$. Note that the sequence
	\begin{align}\label{eq:staircase-seq-increasing}
		a_{0}^{(k)}<\ldots < a_{\hat{s}_1}^{(k)} = a_{0}^{(k-1)} < \ldots < a_{\hat{s}_2}^{(k-1)} < \ldots < a_{\hat{s}_{k-1}}^{(2)} =  a_{0}^{(1)} < \ldots < a_{\hat{s}_{k}}^{(1)} 
	\end{align} 
	is strictly increasing. Furthermore, it holds that $ a_{\hat{s}_{k}}^{(1)} = N_k$. Indeed, as $ \hat{s}_{k} $ is a stationarity point of $ s\mapsto a^{(1)}_s $, the full apex $ \apex_{\alpha + (\hat{s}_k,\ldots,\hat{s}_1)}(E) $ is nonempty by \Cref{lem:stationarity-lemma-rectangular} and so $ E $ must be $ \alpha + (\hat{s}_k,\ldots,\hat{s}_1) $-filling. 
	Counting the number of steps in the sequence \eqref{eq:staircase-seq-increasing}, this shows that $ 1 + \hat{s}_1 + \hat{s}_2 + \ldots + \hat{s}_k \le N_k $.

	Recall that we want to show that $ E $ is $ \alpha $-nondefective. To the contrary, we assumed that $E$ was $(\alpha_1,\dots,\alpha_k)$-defective. We showed that there exists a sequence $ \hat{s}_1,\ldots,\hat{s}_k $ summing up to at most $ N_k-1 $ such that $ E $ is $ \alpha + (\hat{s}_1,\ldots,\hat{s}_k)  $-filling. A dimension count shows that then we must have 
	\begin{align*}
		(\alpha_1 + \hat{s}_1) N_1  + \ldots + (\alpha_k + \hat{s}_k) N_k \ge \dim V. 
	\end{align*}
	Estimating $ \hat{s}_i N_i\le \hat{s}_i N_1 $ for all $ i=1,\ldots,k $ and using $  \hat{s}_1 + \hat{s}_2 + \ldots + \hat{s}_k \le N_k-1 $ yields 
	\begin{align*}
		\alpha_1N_1  + \ldots + \alpha_kN_k + N_1 (N_k-1) \ge \dim V. 
	\end{align*}
	However, this contradicts the last inequality in \eqref{eq:dimension-constraint-k-components}. Therefore, $ E $ must be $ \alpha $-nondefective. 
\end{proof}

\subsection{Secant varieties} We obtain the following consequence for secant varieties. 

\begin{cor}\label{cor:reducible-secants-nondefective}
	Let $ X $ be a $ G $-invariant affine cone in the irreducible $ G $-module $ V $, with $ k $ irreducible components of maximum dimension $ N_{\max} $. Then,  
	\begin{enumerate}
		\item $ X $ is $ r $-nondefective for all $ r\leq \frac{\dim V}{N_{\max}} - N_{\max} $.
		\item If $ r \geq \frac{\dim V}{N_{\max}} + N_{\max} $, then $ \sigma_r(X) = V $. 
	\end{enumerate}
\end{cor}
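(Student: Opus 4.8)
The plan is to specialize \Cref{thm:k-factors} to the embedded tangent bundle and translate back via Terracini's lemma. Take $E$ to be the tangent bundle of $X_{\smooth}$, which is a $G$-equivariant $V$-embedded vector bundle (since $X$, hence $X_{\smooth}$, is $G$-invariant and $G$ acts by linear automorphisms) whose rank on the component $X_i$ is $N_i := \dim X_i$; reorder the components so that $N_1 \ge \cdots \ge N_k$, so $N_1 = N_{\max}$. For a component type $\alpha$ with $|\alpha| = r$ and a general sequence $x$ of that type, Terracini's lemma identifies $\langle E_{x_1},\ldots,E_{x_r}\rangle$ with the general tangent space of the join $J(X_{i_1},\ldots,X_{i_r})$ recorded by $\alpha$; hence ``$E$ is $r$-nondefective'' is equivalent to ``$X$ is $r$-nondefective'' in the sense of \Cref{def:geometrically-defective}(1), and ``$E$ is $\alpha$-filling'' to ``that join equals $V$''.

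For (1) I would verify that $r \le \dim V/N_{\max} - N_{\max}$ forces the chain \eqref{eq:dimension-constraint-k-components} for every $\alpha$ with $|\alpha| = r$: since $N_i \le N_1$ and $\sum_{i\le j}\alpha_i \le r$, each left-hand side is at most $N_1 r + N_1(N_1-1) = N_1(r + N_1 - 1) \le N_1(\dim V/N_1 - 1) = \dim V - N_1 < \dim V$. The same estimate gives $\sum_i\alpha_i N_i \le N_1 r < \dim V$, so no $\alpha$ is filling in this range and ``$\alpha$-nondefective'' is indeed the relevant notion. Thus \Cref{thm:k-factors} yields $\alpha$-nondefectivity for all such $\alpha$, i.e. $r$-nondefectivity of $X$.

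For (2) I would run the stationarity argument of \Cref{thm:2-factors,thm:k-factors} with the base sequence concentrated on a fixed component $X_{i_0}$ with $\dim X_{i_0} = N := N_{\max}$. Suppose $\sigma_r(X) \neq V$. Then $E$ is not $re_{i_0}$-filling, and since $rN \ge (\dim V/N + N)N > \dim V$ it cannot be $re_{i_0}$-nondefective either, so $\dim\sigma_r(X_{i_0}) < \dim V$. Track $a_s := \dim(\langle E_{x_1},\ldots,E_{x_s}\rangle \cap E_y)$ for general $x_1,\ldots,x_s,y \in X_{i_0}$. While $a_s = 0$ one has $\dim\sigma_s(X_{i_0}) = sN \le \dim\langle X_{i_0}\rangle \le \dim V$, so this can persist only up to some $s = s_1 \le \dim V/N$; thereafter $a_s$ is nondecreasing and, by \Cref{lem:stationarity-lemma-rectangular}, strictly increasing until a stationarity point $\hat s$, necessarily with $\hat s \le s_1 + N - 1 \le \dim V/N + N - 1 < r$. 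At $\hat s$, \Cref{lem:stationarity-lemma-rectangular} identifies $\apex_{\hat s e_{i_0}}(E)$ — the vertex of $\sigma_{\hat s}(X_{i_0})$, cf. \Cref{prop:apex-space} — with the nonzero space $\langle E_{x_1},\ldots,E_{x_{\hat s}}\rangle \cap E_y$ (the case $a_{\hat s} = N$ instead forces $\sigma_{\hat s}(X_{i_0}) = \langle X_{i_0}\rangle$ outright, again with nonzero vertex). Now $G$-equivariance spreads this vertex over the $G$-orbit of $X_{i_0}$, and since $V$ is an irreducible $G$-module the $G$-submodule it spans must be all of $V$; realizing $V$ inside a suitable join $J(X_{j_1},\ldots,X_{j_{r'}})$ with $r' \le r$ and invoking $\sigma_r(X) \supseteq \sigma_{r'}(X)$ then gives $\sigma_r(X) = V$, a contradiction.

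I expect the last step of (2) to be the main obstacle: a nonzero vertex of one component's secant does not by itself make the reducible secant $\sigma_r(X)$ full, so the bookkeeping must control the defect $\hat s N - \dim\sigma_{\hat s}(X_{i_0})$ and the way the several maximal-dimension components lying in a single $G$-orbit are amalgamated into one join, so as to keep the threshold at $\dim V/N_{\max} + N_{\max}$ rather than the weaker $\dim V/N_{\max} + N_{\max}^2$ that a naive ``one extra dimension per added component'' count would give.
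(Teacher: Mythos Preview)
Your treatment of (1) is correct and essentially identical to the paper's: specialize \Cref{thm:k-factors} to the tangent bundle and bound every left-hand side in \eqref{eq:dimension-constraint-k-components} by $N_1 r + N_1(N_1-1)$.

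For (2) the paper takes a shortcut you missed: it simply cites the irreducible case, \cite[Corollary~3.5]{Blomenhofer_Casarotti_2023}, applied to the single component $X_1$ of dimension $N_{\max}$, obtaining $\sigma_r(X_1)=V$ directly and hence $\sigma_r(X)\supseteq\sigma_r(X_1)=V$. Your stationarity argument on $X_{i_0}$ is in effect a reproof of that cited corollary, and it is correct up to the last step; the worry you voice in your final paragraph is unfounded. The point you are missing is that the apex is always \emph{contained in} the secant (\Cref{prop:apex-space}): once $\apex_{\hat s e_{i_0}}(E)=\apex(\sigma_{\hat s}(X_{i_0}))$ is a nonzero $G$-submodule of the irreducible module $V$, it equals $V$, whence already $\sigma_{\hat s}(X_{i_0})=V$. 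Since $\hat s\le \dim V/N_{\max}+N_{\max}-1< r$, this gives $\sigma_r(X)\supseteq\sigma_{\hat s}(X_{i_0})=V$ immediately. There is no need to assemble a join of several components, no defect bookkeeping, and no danger of the threshold degrading to $\dim V/N_{\max}+N_{\max}^2$. (Your hedge about ``the $G$-orbit of $X_{i_0}$'' does touch a genuine subtlety---an individual component need not be $G$-stable when $G$ permutes the components---but the paper's own proof makes the same implicit assumption when it applies the irreducible result to $X_1$.)
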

\begin{proof}
	Denote by $ X_1,\ldots,X_k $ the irreducible components of $ X $ of respective dimensions $ N_1,\ldots,N_k $, sorted such that $ N_1\ge \ldots \ge N_k $. 
	By Terracini's lemma, we have that $ X $ is $ r $-nondefective if and only if for all $ x_1,\ldots,x_r\in X $ of any possible component type $ \alpha\in \N_0^k $ (with $ |\alpha| = r $), the space $ \langle T_{x_1}X,\ldots,T_{x_r}X \rangle$ is of dimension $ \alpha_1N_1 + \ldots + \alpha_kN_k $. Hence, $ r $-nondefectivity of $ X $ is equivalent to $ \alpha $-nondefectivity of the tangent bundle $ T $ of $ X $ for each $ \alpha $ with $ |\alpha| = r $.
	Taking $ E = T $ as the tangent bundle in \Cref{thm:k-factors}, we see that $ T $ is $ \alpha $-nondefective for each $ \alpha $ such that $ \alpha_1N_1 + \ldots + \alpha_kN_k - N_{1}(N_{1}-1) < \dim V$. Indeed, this follows from estimating all terms $ N_i(N_1-1) $ by $ N_1(N_1-1) $. Furthermore, by estimating $ \alpha_iN_i\le \alpha_i N_1 $ we get that $ T $ is $ \alpha $-nondefective, if 
	\begin{align*}
		(\alpha_1 + \ldots + \alpha_k) + (N_{1}-1) < \frac{\dim V}{N_1}
	\end{align*}
	Clearly, this condition is the same for all $ \alpha $ with $ r = |\alpha| $. Using that $ N_1 = N_{\max} $, we conclude that $ X $ is $ r $-nondefective for all $ r\leq \frac{\dim V}{N_{\max}} - N_{\max} $.
	
	Conversely, let $ r \geq \frac{\dim V}{N_{\max}} + N_{\max} $. By \cite[Corollary 3.5]{Blomenhofer_Casarotti_2023}, we know that $ \sigma_{r}(X_1) $ is filling the space $ V $. Since $ X \supseteq X_1 $, we conclude that $ \sigma_r(X) = V $. 
\end{proof}

\subsection{Filling criterion and the diagonal stationarity lemma} 
\Cref{cor:reducible-secants-nondefective} gives a criterion when a reducible secant is filling, via its irreducible component of largest dimension. However, it is more interesting to have a criterion when a specific join is filling. We can in fact give a criterion when a bundle is $ \alpha $-filling. For irreducible varieties $ X $, such a filling criterion is trivially obtained as a byproduct of the argument for nondefectivity, see \cite{nenashev2017note} and \cite{Blomenhofer_Casarotti_2023}.
On the other hand, a direct criterion ensuring that a bundle $E$ on a \textit{reducible} variety $X=X_1 \cup \dots \cup X_k$ is $\alpha$-filling is not easily feasible from \Cref{thm:k-factors}. To overcome this, we state a different type of stationarity lemma, \Cref{lem:stationarity-lemma-diagonal}, and use it to give bounds for the $\alpha$-filling property.
\begin{theorem}\label{thm:diagonal-filling-criterion}
	Let $ V, G, E, X $ and $ N_1\ge\ldots \ge N_k $ as in \Cref{thm:k-factors}. Assume $ \alpha\in \N_0^n $ is such that $ \alpha_i\ge N_1 $ for each $ i = 1,\ldots,k $ and that 
	\begin{align}\label{eq:dimension-constraint-diagonal-filling}
		\alpha_1N_1 + \ldots + \alpha_k N_k > \dim V + (N_1-1)(N_1 + \ldots + N_k)
	\end{align}
	Then, $ E $ is $ \alpha $-filling. 
\end{theorem}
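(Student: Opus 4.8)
\emph{Proof idea.} The plan is to argue by contradiction and reduce the statement to a dimension count, following the template of the proof of \Cref{thm:k-factors} but replacing \Cref{lem:stationarity-lemma-rectangular} by its diagonal counterpart, the diagonal stationarity lemma \Cref{lem:stationarity-lemma-diagonal}. So suppose $E$ is \emph{not} $\alpha$-filling. Then $E$ is not $\beta$-filling for any $\beta\preceq\alpha$, so for a general sequence $x$ of type $\beta$ the span $\langle E_{x_1},\ldots\rangle$ is a proper subspace of $V$; since $\apex_\beta(E)$ is a $G$-submodule of $V$ (by \Cref{lem:apex-lemma} and the remark following \Cref{prop:apex-space}) contained in that proper subspace, and $V$ is an irreducible $G$-module, we conclude $\apex_\beta(E)=0$, and likewise every partial apex built from a type-$(\preceq\alpha)$ sequence vanishes.

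Next I would build a general sequence of type $\alpha$ in \emph{rounds}: round $j$ (for $1\le j\le\max_i\alpha_i$) consists of one general point of each component $X_i$ with $\alpha_i\ge j$. Write $W_j$ for the span of the fibers over the points used in rounds $1,\dots,j$, so $W_0=0$ and $W_{\max_i\alpha_i}=\langle E_{x_1},\dots,E_{x_r}\rangle$, and for a fixed general $y\in X_i$ put $b^{(i)}_j:=\dim(E_y\cap W_j)$, a nondecreasing sequence with $0\le b^{(i)}_j\le N_i\le N_1$. Applying \Cref{lem:stationarity-lemma-diagonal} exactly as \Cref{lem:stationarity-lemma-rectangular} is applied in the proof of \Cref{thm:k-factors} — splitting off the points of the current sequence one component at a time — a stationary point $b^{(i)}_j=b^{(i)}_{j+1}$ with $b^{(i)}_j\ne 0$ produces a \emph{nonzero} full apex $\apex_\beta(E)$ for some $\beta\preceq\alpha$, contradicting the first paragraph. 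Hence each $b^{(i)}_\bullet$ is strictly increasing while it takes values in $\{1,\dots,N_i-1\}$, and, since a value $b^{(i)}_j=N_i$ followed by a further $X_i$-round would again be a nonzero stationary point, it cannot reach $N_i$ until the last round in which $X_i$ is sampled; because $\alpha_i\ge N_1\ge N_i$, there is room for this. This is what controls the shortfall: the defect of round $j$ is a sum over the sampled components of the corresponding $b^{(i)}$-values, and by the above each component contributes to the total shortfall only a strictly increasing ``staircase'' of values in $\{1,\dots,N_i-1\}$, hence at most $\binom{N_i}{2}$ and in any case at most $(N_1-1)N_i$; summing over the $k$ components, the total shortfall $C:=\sum_i\alpha_iN_i-\dim\langle E_{x_1},\dots,E_{x_r}\rangle$ satisfies $C\le(N_1-1)\sum_{i=1}^k N_i$.

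Taking $j=\max_i\alpha_i$ then yields
\begin{align*}
	\dim\langle E_{x_1},\dots,E_{x_r}\rangle\ =\ \sum_{i=1}^k\alpha_iN_i\ -\ C\ \ge\ \sum_{i=1}^k\alpha_iN_i\ -\ (N_1-1)\sum_{i=1}^k N_i\ >\ \dim V
\end{align*}
by \eqref{eq:dimension-constraint-diagonal-filling}, which forces $\langle E_{x_1},\dots,E_{x_r}\rangle=V$, i.e.\ $E$ is $\alpha$-filling — contradicting the assumption. The step I expect to be the main obstacle is the middle one: making precise, via \Cref{lem:stationarity-lemma-diagonal}, that the pattern of shortfalls across the rounds is governed by a strictly increasing staircase of bounded length for all $k$ components simultaneously, in particular correctly bookkeeping the within-round interference between components, the components whose intersection sequence $b^{(i)}_\bullet$ stays zero throughout, and the redundant points added to a component after it has been absorbed into $W_j$ — so that none of these inflate the bound on $C$. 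As in the proof of \Cref{thm:k-factors}, it will likely be cleanest to reorder the points being added rather than to proceed strictly round-by-round.
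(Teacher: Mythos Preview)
Your proposal identifies the right tool (the diagonal stationarity lemma) but takes a route that does not close, and the paper's argument is both different and much shorter.

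\medskip
\textbf{The paper's argument.} The paper does not attempt to bound the total shortfall $C$ at all. It works on the diagonal line $s\mapsto\alpha_s:=\alpha+s\cdot\underline{1}$. One application of \Cref{lem:stationarity-lemma-diagonal} shows that there can be at most $N_1-1$ values of $s$ for which $E$ is $\alpha_s$-defective and not $\alpha_s$-filling: once some $a_{\alpha_{s_0},i}>0$ (which happens at the first defective $s_0$, by monotonicity of the $a$-values), the sequence $s\mapsto a_{\alpha_s,i}$ must strictly increase while $E$ is not $\alpha_s$-filling, and it lives in $\{1,\ldots,N_i-1\}$. Now \eqref{eq:dimension-constraint-diagonal-filling} forces $\sum_i(\alpha_i+s)N_i>\dim V$ for each $s\in\{-(N_1-1),\ldots,0\}$, so all of these $N_1$ values are defective; if $E$ were not $\alpha$-filling they would also all be non-filling, giving $N_1$ bad values --- one too many. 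That is the whole proof: no rounds, no shortfall bookkeeping, no ``splitting off one component at a time''.

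\medskip
\textbf{Where your approach breaks.} Your main obstacle is not the within-round interference you flag (that one is annoying but fixable by tracking the correct intersections $a_{\underline{j-1}+e_1+\cdots+e_{i-1},i}$ instead of $b^{(i)}_{j-1}$; consecutive such types still differ by $\underline{1}$, so the diagonal lemma still applies). The real gap is the \emph{partial rounds}. Once $j>\min_l\alpha_l$, round $j$ no longer adds one point from every component, so the transition $W_j\to W_{j+1}$ is not a $\underline{1}$-step and \Cref{lem:stationarity-lemma-diagonal} says nothing about $b^{(i)}_j$ versus $b^{(i)}_{j+1}$. The hypothesis $\alpha_i\ge N_1$ only guarantees the first $N_1$ rounds are full; but the shortfall coming from the later, partial rounds is completely uncontrolled by your argument and can dominate. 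Concretely, take $k=1$, $N_1=2$, $\alpha_1=100$, $\dim V=50$: the hypothesis holds, yet after the $N_1=2$ ``full'' rounds your bound gives only $\dim W_2\ge 3$, nowhere near $\dim V$ --- all the work is in the remaining $98$ single-component rounds, for which you would need the \emph{rectangular} lemma and the iterative partial-apex machinery of \Cref{thm:k-factors}, not the diagonal one. Reordering the points does not help: the diagonal lemma is intrinsically tied to $\underline{1}$-increments, so any ordering that exhausts one component before the others leaves a tail it cannot reach. The paper sidesteps all of this by never leaving the diagonal through $\alpha$.
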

\begin{proof}
	From the diagonal stationarity lemma, see \Cref{lem:stationarity-lemma-diagonal} below, we conclude that there exist at most $ N_1-1 $ values of $ s\in \Z $ such that $ \alpha_s :=  \alpha + (s,\ldots,s) $ lies in $ \N_0^k $ and $ E $ is not of $ \alpha_s $-expected dimension.\footnote{That is, $ E $ is $ \alpha_s $-defective but not $ \alpha_s $-filling. }
	Due to the dimension constraint \eqref{eq:dimension-constraint-diagonal-filling}, $ E $ must be $ \alpha_s $-defective for each $ s\in \{0, -1, \ldots, -(N_1-1)\} $. This means that if $ E $ was not $ \alpha $-filling, then there would be $ N_1 $ cases on the line $ s\mapsto \alpha_s $, which are not of expected dimension, which contradicts \Cref{lem:stationarity-lemma-diagonal}. We conclude that $ E $ is $ \alpha $-filling. 
\end{proof}

\noindent
For $ m\in \N_0 $, let us denote $ \underline{m} = (m,\ldots,m) \in \N_0^k $. 
\begin{lemma}[Diagonal Stationarity Lemma]\label{lem:stationarity-lemma-diagonal}
	Let $ V $ be an irreducible $ G $-module and $ E $ be a $ G $-invariant $ V $-embedded vector bundle on $ X $. Let 
	\begin{align}
		a_{\alpha, i} := \dim \langle E_{x_1},\ldots,E_{x_r} \rangle \cap E_y, 
	\end{align}
	where $ x $ is a general sequence in $ X $ of type $ \alpha $ and $ y\in X_i $. If $ \alpha \in \N_0^k, i \in \{1,\ldots,k\}$ are such that $ a_{\alpha+\underline{1}, i} = a_{\alpha, i} \ne 0 $, then $ E $ is $ \alpha $-filling. In particular, we then have $ a_{\alpha, i} = N_{i} $. 
\end{lemma}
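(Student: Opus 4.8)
The plan is to deduce the statement from the irreducibility of $V$ as a $G$-module, by showing that the hypothesis forces the $\alpha$-apex $\apex_\alpha(E)$ to be nonzero. For a finite sequence $x=(x_1,\dots,x_r)$ in $X$ I abbreviate $\langle E_x\rangle:=\langle E_{x_1},\dots,E_{x_r}\rangle$. Granting $\apex_\alpha(E)\ne\{0\}$: it is a $G$-submodule of $V$ by \Cref{lem:apex-lemma} together with the $G$-equivariance of $E$ (exactly as invoked in the proof of \Cref{thm:2-factors}), so by irreducibility of $V$ it equals $V$; and since $\apex_\alpha(E)\subseteq\langle E_x\rangle$ for every general sequence $x$ of type $\alpha$ (with $r=|\alpha|$), this gives $\langle E_x\rangle=V$, i.e. $E$ is $\alpha$-filling. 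The final assertion is then immediate: $a_{\alpha,i}=\dim(V\cap E_y)=\dim E_y=N_i$.

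To produce a nonzero element of $\apex_\alpha(E)$ I fix a general point $y\in X_i$ and show that the subspace $\langle E_x\rangle\cap E_y$ of $E_y$ is the same for all general type-$\alpha$ sequences $x$. The first ingredient is a stabilization observation: monotonicity of the $a$-sequence gives $a_{\alpha,i}\le a_{\alpha+e_j,i}\le a_{\alpha+\underline{1},i}=a_{\alpha,i}$ for each $j$, so appending one general point $w\in X_j$ to $x$ cannot enlarge the (only increasing) intersection, whence $\langle E_x\rangle\cap E_y=\langle E_{x_1},\dots,E_{x_r},E_w\rangle\cap E_y$. The second ingredient is a swapping argument in the spirit of Step~1 of the proof of \Cref{lem:stationarity-lemma-rectangular}: let $x'$ be obtained from $x$ by replacing one coordinate $x_\ell\in X_j$ by another general point $x_\ell'\in X_j$, and set $\hat x:=(x_1,\dots,x_r,x_\ell')$, a general sequence of type $\alpha+e_j$. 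Applying the stabilization observation with $w=x_\ell'$ gives $\langle E_{\hat x}\rangle\cap E_y=\langle E_x\rangle\cap E_y$; on the other hand $\langle E_{x'}\rangle\subseteq\langle E_{\hat x}\rangle$ and $\dim(\langle E_{x'}\rangle\cap E_y)=a_{\alpha,i}=\dim(\langle E_{\hat x}\rangle\cap E_y)$, so $\langle E_{x'}\rangle\cap E_y=\langle E_{\hat x}\rangle\cap E_y$; combining, $\langle E_{x'}\rangle\cap E_y=\langle E_x\rangle\cap E_y$. Iterating coordinate by coordinate, I obtain a dense open subset $\mathcal U\subseteq\mathcal U_{E,\alpha}$ and a fixed subspace $S\subseteq E_y$ with $\dim S=a_{\alpha,i}\ne0$ such that $\langle E_u\rangle\cap E_y=S$ for all $u\in\mathcal U$.

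With this in hand, \Cref{lem:apex-lemma} gives $\apex_\alpha(E)=\bigcap_{u\in\mathcal U}\langle E_u\rangle$, hence $\apex_\alpha(E)\cap E_y=\bigcap_{u\in\mathcal U}\bigl(\langle E_u\rangle\cap E_y\bigr)=S\ne\{0\}$; in particular $\apex_\alpha(E)\ne\{0\}$, and the first paragraph finishes the argument. I expect the main subtlety to be the genericity bookkeeping inside the swap: one must ensure that at each coordinate change the incoming point, the surviving old points, and $y$ are jointly general, so that all spans that appear meet $E_y$ in a space of the generic dimension $a_{\alpha,i}$. A secondary point is the upgrade from ``the intersection is unchanged under each single generic swap'' to ``the intersection is literally constant on a dense open subset'', which uses that the type-$\alpha$ sequences form an irreducible parameter space and that the diagonal of $\grass(a_{\alpha,i},E_y)\times\grass(a_{\alpha,i},E_y)$ is closed; everything else is a dimension count together with the $G$-module property of the apex.
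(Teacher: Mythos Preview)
Your proof is correct and follows essentially the same approach as the paper: show via a swapping argument that $\langle E_x\rangle\cap E_y$ is independent of the general type-$\alpha$ sequence $x$, conclude that $\apex_\alpha(E)\ne\{0\}$ by \Cref{lem:apex-lemma}, and finish with $G$-irreducibility of $V$. The only organizational difference is that you first sandwich $a_{\alpha,i}\le a_{\alpha+e_j,i}\le a_{\alpha+\underline{1},i}$ to deduce single-coordinate stationarity and then swap one point at a time, whereas the paper uses the hypothesis $a_{\alpha+\underline{1},i}=a_{\alpha,i}$ directly to swap $k$ points (one per component) simultaneously; both routes lead to the same conclusion.
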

\begin{proof}
	For any sequence $ x = (x_{1},\ldots,x_r) $ of generic points of type $ \alpha $, let us denote 
	\begin{align}
		L(x)  &= \langle E_{x_1},\ldots,E_{x_r} \rangle
	\end{align}
	Consider another sequence $ x' = (x_1',\ldots,x_r') $ of generic points of type $ \alpha $ and pick yet another generic point $ y\in X_i $. 
	We are to show that $ L(x) \cap E_{y} = L(x') \cap E_{y} $. To this end, let us construct some extra sequences. 
	Without loss of generality, let us assume that $ \alpha\succeq \underline{1} $. Indeed, if $ \alpha_i = 0 $ for some $ i \in \{1,\ldots,k\} $, then we may ignore the existence of the $ i $-th component for the remainder of the proof. 
	Order $ x $ and $ x' $ such that $ x_1, x_1'\in X_1,\ldots, x_k,x_k' \in X_k $. This is possible, since $ \alpha\succeq \underline{1} $. Then, define 
	\begin{align}
		x^{\uparrow } &= (x_1,\ldots,x_r, x_{1}',\ldots,x_{k}'), \text{ and }\\
		x^{\updownarrow } &= (x_{1}',\ldots,x_{k}', x_{k+1},\ldots,x_r).
	\end{align}
	In other words, $ x^{\uparrow} $ is the sequence $ x $ augmented by the first $ k $ entries of $ x' $, while $ x^{\updownarrow} $ is the sequence $ x $, except that the first $ k $ entries of $ x $ are exchanged by the corresponding entries of $ x' $. Note that $ x^{\uparrow} $ is of type $ \alpha+\underline{1} $, while $ x^{\updownarrow} $ is of type $ \alpha $. Since $ a_{\alpha, i} = a_{\alpha+\underline{1}, i} $, we know that
	\begin{align}
		\dim L(x) \cap E_{y} = a_{\alpha, i} = a_{\alpha+\underline{1}, i} = \dim L(x^{\uparrow })\cap E_{y}. \nonumber
	\end{align}
	The space on the left-hand side is a subspace of the space on the right-hand side, of same dimension. Hence, they are equal: 
	\begin{align}
		L(x) \cap E_{y} = L(x^{\uparrow}) \cap E_{y}. 
	\end{align}
	Notice that the sequence $ x^{\uparrow } $ not only is an augmentation of $ x $, but also of $ x^{\updownarrow } $. Therefore, we may apply the same argument with $ x^{\updownarrow} $ instead of $ x $ to obtain
	\[
	{L}(x^{\updownarrow }) \cap E_{y} = 	L(x^{\uparrow }) \cap E_{y}. 
	\]
	Together, this implies
	\begin{align}\label{eq:ts-xm-exchange}
		L(x) \cap E_{y} = {L}(x^{\updownarrow}) \cap E_{y}
	\end{align}
	We thus showed that swapping an element from each irreducible component between $ x $ and $ x' $ does not change the space from \Cref{eq:ts-xm-exchange}. 
	Repeating this argument inductively with different indices, we obtain
	\begin{align}\label{eq:ts-exchange-all}
		L(x) \cap E_{y} = L(x') \cap E_{y}
	\end{align}
	In other words, any point $ p $ in $ L(x) \cap E_{y} $ lies in $L(x')$ for all general choices of $ x' $. 
	Thus,
	\begin{align}
		L(x) \cap E_{y} = \apex_{\alpha}(E) \cap E_y,
	\end{align}
	by \Cref{lem:apex-lemma}. 
	In particular, the $ \alpha $-apex of $ E $ is not the zero space, since $ a_{\alpha, i} > 0 $. The space $ \apex_{\alpha}(E) $ is $ G $-invariant and $ V $ is an irreducible $ G $-module. Therefore, it follows that $ V = \apex_{\alpha}(E) \subseteq L(x) $. Consequently, $ L(x) = V $ and thus $ a_{\alpha, i} = \dim E_y = N_i $. 
\end{proof}

Note that the diagonal stationarity lemma can of course also be used to give a numerical criterion for $ \alpha $-nondefectivity. However, the criterion is strictly weaker than \Cref{thm:k-factors}. Therefore, we refrain from stating it. 

\begin{remark}\label{rem:alpha-filling-when-a-maximal}
	If $ a_{\alpha-e_i,i} = N_i $, then $ E $ is $ \alpha $-filling. Indeed, if $ \langle E_{x_1},\ldots,E_{x_r} \rangle \cap E_y = E_y $ for all general $ y\in X_i $ and all $ x_1,\ldots,x_r\in X $ of component type $ \alpha $, then clearly, $ L(x) :=  \langle E_{x_1},\ldots,E_{x_r} \rangle $ contains the span of all spaces $ E_y $, which is $ G $-invariant, since $ E $ is $ G $-equivariant. As $ V $ is an irreducible $ G $-module, the span of all spaces $ E_y $ must thus be $ V $. Therefore, $ L(x) = V $. 
\end{remark}

\section{Applications}\label{sec:applications}

\subsection{Fröberg's conjecture for forms of non-equal degree}
Recall that the Hilbert function $ h_I $ of a homogeneous ideal $ I $ is given by $ h_{I}(\ell) := \dim I_{\ell} $, where $ I_{\ell} $ denotes the $ \ell $-th graded component of $ I $.
Given general forms $ f_1,\ldots,f_r $ of degrees $ (\underbrace{d_1,\ldots,d_1}_{\alpha_1},\ldots,\underbrace{d_k,\ldots,d_k}_{\alpha_k}) $, we consider the ideal $ I = (f_1,\ldots,f_r) $ generated by $ f_1,\ldots,f_r $. That is, the sequence $ f_1,\ldots,f_r $ contains $ \alpha_i$ forms of degree $ d_i $ for each $ i\in \{1,\ldots,k\} $. For $ \alpha\in \N_0^k $ with $ |\alpha| = r $, we call such a sequence of forms a general sequence of type $ \alpha $. Assume that $ d_1\le \ldots \le d_k $.
Let $ N_t  = \binom{n+t-1}{t} $ denote the dimension of the space of forms of degree $ t $.
We obtain the following contribution to Fröberg's conjecture \cite{froberg1985inequality}. 

\begin{theorem}\label{thm:froeberg} Let $ f_1,\ldots,f_r \in \C[x_1,\ldots,x_n]$ be a sequence of forms of type $ \alpha $. 
	Let $ \ell\in \N $. Then, the vector space $  (f_1,\ldots,f_r)_{d_k+\ell} $	has the expected dimension, so
	\begin{align*}
		h_{I}(d_k+\ell) = \alpha_{d_k} N_{\ell} + \alpha_{d_{k-1}} N_{\ell+(d_k-d_{k-1})}  + \ldots + \alpha_{d_1} N_{\ell+d_k-d_1},
	\end{align*}
	if for all $ i = 1,\ldots,k $, we have 
	\begin{align}\label{eq:m-bound-fröberg}
		\alpha_{d_1} N_{\ell+d_k-d_1} + \ldots + \alpha_{d_i} N_{\ell+d_k-d_i} + N_{\ell+d_k-d_1}(N_{\ell+d_k-d_i}-1) < N_{\ell + d_k}.
	\end{align}
Otherwise, if $ \alpha_{d_1} N_{\ell+d_k-d_1} + \ldots + \alpha_{d_k} N_{\ell} > N_{\ell + d_k} + kN_{\ell+d_k-d_1}(N_{\ell+d_k-d_1}-1)$, then the fat point scheme is empty. 
\end{theorem}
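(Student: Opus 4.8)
The plan is to deduce \Cref{thm:froeberg} from \Cref{thm:k-factors} (for the nondefectivity part) and \Cref{thm:diagonal-filling-criterion} (for the ``fat point scheme is empty'' part), by exhibiting the right $V$-embedded $G$-equivariant vector bundle. First I would set $V = \C[x_1,\dots,x_n]_{d_k+\ell}$, the space of forms of degree $d_k+\ell$, which is an irreducible $G$-module for $G = \GL_n$ acting on the linear forms (irreducibility of the Schur/symmetric-power module is standard). For each $i = 1,\dots,k$, let $X_i$ be the affine cone over the Veronese-type variety of $(d_k+\ell)$-forms that are divisible by a $d_i$-th power of a linear form -- more precisely, take $X_i$ to be the (closure of the) set of forms of the shape $\ell_i^{d_i}\cdot g$ with $\deg g = d_k + \ell - d_i$, i.e. $X_i = \{ p^{d_i} q : p \in \C[x]_1, q\in \C[x]_{d_k+\ell-d_i}\}$; set $X = X_1\cup\dots\cup X_k$. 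Over a general point $x = p^{d_i} q \in X_i$ I would take the fiber $E_x$ to be the degree-$(d_k+\ell)$ piece of the ideal generated by a general form of degree $d_i$ ``near'' $p^{d_i}q$, but the clean choice is: $E_x := \{ p^{d_i} h : h \in \C[x]_{d_k+\ell-d_i}\}$, the space of $(d_k+\ell)$-forms divisible by $p^{d_i}$. This fiber has dimension $N_{d_k+\ell-d_i} = \binom{n+\ell+d_k-d_i-1}{\ell+d_k-d_i}$, is clearly $\GL_n$-equivariant, and $\langle E_{x_1},\dots,E_{x_r}\rangle$ for a general sequence of type $\alpha$ is precisely $(f_1,\dots,f_r)_{d_k+\ell}$ for general forms $f_j$ of the corresponding degrees --- this is the standard Macaulay/apolarity translation of the Hilbert-function computation into a span of ``fat linear'' spaces.

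Next I would identify the ranks: with the ordering $d_1 \le \dots \le d_k$, the fiber dimension on $X_i$ is $N_i := N_{\ell+d_k-d_i}$, and since $d_i$ increasing makes $\ell + d_k - d_i$ decreasing, we get $N_1 \ge N_2 \ge \dots \ge N_k$, matching the hypothesis ordering of \Cref{thm:k-factors}. The component type $\alpha$ in the sense of the vector-bundle framework is exactly the vector $(\alpha_{d_1},\dots,\alpha_{d_k})$ recording how many forms of each degree appear. Then $\alpha$-nondefectivity of $E$ says precisely $\dim (f_1,\dots,f_r)_{d_k+\ell} = \alpha_{d_1}N_{\ell+d_k-d_1} + \dots + \alpha_{d_k}N_\ell$, which is the claimed expected value of $h_I(d_k+\ell)$. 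Plugging $N_i = N_{\ell+d_k-d_i}$ and $N_1 = N_{\ell+d_k-d_1}$ into the chain of inequalities \eqref{eq:dimension-constraint-k-components} of \Cref{thm:k-factors} gives exactly \eqref{eq:m-bound-fröberg}, so that part follows immediately once the bundle is set up correctly. For the ``empty fat point scheme'' conclusion, $\alpha$-filling of $E$ means $(f_1,\dots,f_r)_{d_k+\ell} = V$, i.e. there is no nonzero form of degree $d_k+\ell$ in the intersection ideal of the dual fat points, which is the emptiness claim; applying \Cref{thm:diagonal-filling-criterion} with $N_1 = N_{\ell+d_k-d_1}$ and $N_1 + \dots + N_k = \sum_i N_{\ell+d_k-d_i} \le k N_{\ell+d_k-d_1}$ turns \eqref{eq:dimension-constraint-diagonal-filling} into the stated bound $\alpha_{d_1}N_{\ell+d_k-d_1} + \dots + \alpha_{d_k}N_\ell > N_{\ell+d_k} + kN_{\ell+d_k-d_1}(N_{\ell+d_k-d_1}-1)$.

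The main obstacle, and the step requiring genuine care rather than bookkeeping, is verifying that the span $\langle E_{x_1},\dots,E_{x_r}\rangle$ for a general type-$\alpha$ sequence genuinely coincides (as a generic-dimension statement) with $(f_1,\dots,f_r)_{d_k+\ell}$ for general forms $f_j$ of the prescribed degrees. The subtlety is that a general form of degree $d_i$ is \emph{not} of the form $p^{d_i}$ --- it is not a pure power --- so one must argue that the relevant span is unchanged when one replaces a general degree-$d_i$ generator by a power of a general linear form. This is exactly the kind of reduction handled classically via Terracini's lemma together with the fact that the tangent space at $p^{d_i}q$ to the relevant Veronese-like variety, or equivalently the fiber of the correct bundle, has the same span behaviour; concretely one shows that $\langle (f_1)_{\le}, \dots \rangle$ where the degree-$d_i$ pieces range over the \emph{tangent space} to the variety of $d_i$-th powers recovers, after multiplication, the full ideal component, because the ideal generated by a general $f$ in degree $d_k+\ell$ is $f\cdot\C[x]_{d_k+\ell-d_i}$ and the map $f \mapsto \langle f\cdot \C[x]_{d_k+\ell-d_i}\rangle$ is generically governed by the span of the $f$'s projected suitably. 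I would handle this by a short lemma: for general forms the span $\sum_j f_j\C[x]_{d_k+\ell-\deg f_j}$ has the same dimension whether the $f_j$ are taken general of their degree or taken as $d_i$-th powers of general linear forms times general complementary-degree forms --- this is essentially the content that makes Fröberg-type problems equivalent to secant/join dimension problems, and it, or a citation to \cite{Blomenhofer_Casarotti_2023} or \cite{nenashev2017note} where the analogous equal-degree reduction is carried out, completes the argument. Everything else is substituting $N_i = N_{\ell+d_k-d_i}$ into the two cited theorems.
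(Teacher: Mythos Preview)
Your approach is workable but takes an unnecessary detour compared to the paper. The paper chooses the base variety to be simply $X_i = \C[x_1,\ldots,x_n]_{d_i}$, the \emph{full linear space} of degree-$d_i$ forms (recall the framework does not require $X\subseteq V$, only the fibers), and the fiber over $f\in X_i$ is $E_f = (f)_{d_k+\ell} = f\cdot\C[x]_{d_k+\ell-d_i}$. This bundle is trivially $\GL_n$-equivariant and trivial on each component, and with this choice the identity $\langle E_{f_1},\ldots,E_{f_r}\rangle = (f_1,\ldots,f_r)_{d_k+\ell}$ is a tautology for \emph{any} forms $f_j$, not just general ones. The ``main obstacle'' you identify---passing from powers of linear forms back to general forms---therefore never arises; the entire proof is three lines of bookkeeping plus citations to \Cref{thm:k-factors} and \Cref{thm:diagonal-filling-criterion}.

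Your setup, with $X_i = \{p^{d_i}q\}$ and $E_x = p^{d_i}\cdot\C[x]_{d_k+\ell-d_i}$, is essentially the bundle used for the \emph{fat point} application (\Cref{thm:fat-points}), where powers of linear forms are genuinely the objects of interest. It can still be pushed through for Fr\"oberg: once your bundle is shown $\alpha$-nondefective, semicontinuity (nondefectivity is open, and the expected dimension is an upper bound) immediately gives the same for general forms, and similarly $\alpha$-filling specializes. But you do not state this semicontinuity step---instead you gesture at a ``short lemma'' claiming the two dimensions are \emph{equal}, which is both stronger than needed and not obviously true without further argument. So: correct destination, but the paper's route avoids the scenic detour entirely by picking a base space where the points already \emph{are} general forms.
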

\begin{proof}
	Take $ X = \C[x_1,\ldots,x_n]_{d_1} \cup \ldots \cup \C[x_1,\ldots,x_n]_{d_k}  $, endowed with the canonical action of $ \GL_n $ on the variables. Let $ \ell \in \N_0 $ and define $ E $ to be the bundle on $ X $, embedded into $ V = \C[x_1,\ldots,x_d]_{d_k+\ell} $, which has fibers $ E_f = (f)_{d_k+\ell} $. In other words, the fiber $ E_f $ is the graded component of the principal ideal generated by $ f $ in degree $ d_k+\ell $. Note that $ E_f $ is a trivial bundle on each component of $ X $. Clearly, $ E $ is $ \GL_n $-equivariant and the Hilbert function $ h_I(d_k+\ell) $ of $ I $ at degree $ d_k+\ell $ equals the dimension of $ \langle E_{f_1},\ldots,E_{f_r} \rangle $. On the component $ \C[x_1,\ldots,x_n]_{d_i} $, $ E $ has rank $ N_{d_k-d_i+\ell} $. The claimed nondefectivity result now follows from \Cref{thm:k-factors}. The second claim is due to \Cref{thm:diagonal-filling-criterion}.
\end{proof}

We give an explicit class of examples and we plot the two bounds on a graph.
Here we fix some conditions on the forms $(f_1,\dots,f_r) \in \mathbb{C}[x_1,\dots,x_n]$. The first 20\% of the forms are of degree $d_1=5$ while the last 80\% are of degree $d_2=6$. With respect to the notation introduced above we have $\alpha_{d_1}=\lfloor 0.2r \rfloor, \alpha_{d_2}=r-\alpha_{d_1}$ and $l=1$. 

\begin{figure}[H]
\includegraphics[scale=0.5]{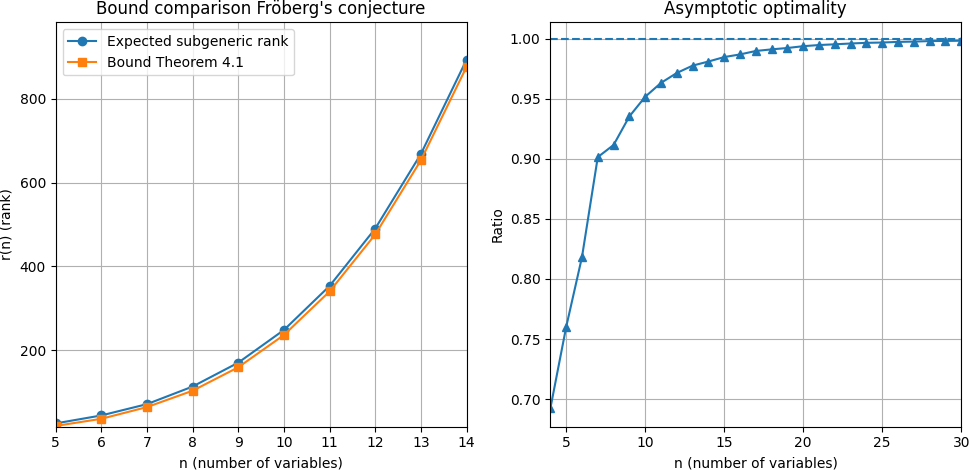}
\caption{\footnotesize{In the first figure the blue line selects the expected subgeneric rank $r$, i.e. the maximum value of $r$ such that $(f_1,\dots,f_r)_{d+2}$ is not filling. If Fröberg's conjecture is true then the graded component $(f_1,\dots,f_r)_{d+2}$ is a vector space of the expected dimension. The orange line instead plots the maximum value of $r$ such that condition \eqref{eq:m-bound-fröberg} holds true.
In the second figure we plot the ratio between the two bounds. Note that the function is asymptotically sharp, hence the blue curve has $ 1 $ as an asymptote.}}
\label{fig:froeberg}
\end{figure}

\subsection{The postulation problem for fat point schemes}
The postulation problem for fat point schemes asks the following: Given general points $ p_1,\ldots,p_r\in \mathbb C^n $ and multiplicities $ m(p_1),\ldots,m(p_r) $, what is the dimension of the linear space of all forms of degree $ D $, which vanish to order at least $ m(p_i) $ at $ p_i $. If we denote by $ I_{p}^{m} $ the ideal of forms which vanish at $ p\in \mathbb C^n $ to order at least $ m\in \N $, then we see that the postulation problem asks for the Hilbert function at degree $ D $ of the ideal $ I_{p_1}^{m(p_i)} \cap \ldots \cap I_{p_r}^{m(p_r)}$. 

This Hilbert function has an expected value, which is $ \max\{0, N_D - \sum_{i = 1}^{r} N_{m(p_i)} \} $. 
Note that we can equivalently ask for the dimension of 
\begin{align}\label{eq:fat-point-complement}
	(\langle p_1, x \rangle^{D-m(p_1)+1},\ldots, \langle p_r, x \rangle^{D-m(p_r)+1})_{D},
\end{align}
which is the orthogonal complement with respect to the apolar inner product of the space of polynomials of degree $ D $ in the fat point scheme. This gives a setting similar to Fröberg's conjecture, where we aim to understand the dimension of a graded component of an ideal. 

Note that $ r $ can be much larger than $ D $, but the multiplicities lie of course in $ \{1,\ldots,D\} $. Therefore, very often there will be several points $ p_i $ with the same multiplicity $ m(p_i) $. Suppose that there are $k$ different multiplicities, which we denote by  $m_1>\dots>m_k$, with $k \leq D$. Furthermore, denote with $\alpha_1,\dots,\alpha_k$ the number of points $ p_i $, which have multiplicity $m_1,\dots,m_k$. In particular, if we denote $\alpha=(\alpha_1,\dots,\alpha_k)$, it holds that $|\alpha|=r$. We call $ p_1,\ldots,p_r $  a fat point scheme of \emph{multiplicity type} $ \alpha $. 

The ideal from \eqref{eq:fat-point-complement} is of course not generated by general polynomials. However, it is generated by polynomials, which are general elements of a specific, $ \GL_n $-invariant subvariety: Indeed, similar to \cite{nenashev2017note}, the result on Fröberg's conjecture generalizes to settings, where $ X = \mathcal{D}_1\cup \ldots \cup \mathcal{D}_k $ is a union of $ \GL_n $-invariant classes of $ d_k $-forms $ \mathcal{D}_k $. In the previous section, we took $ \mathcal{D}_k = \C[x_1,\ldots,x_n]_{d_k} $ as the linear space of all forms of degree $ d_k $, where $ d_1,\ldots,d_k\in \N $. In this section, we are interested in the invariant classes $ \mathcal{D}_i = \{\ell^{D-m_i+1} \mid \ell \in \C[x_1,\ldots,x_n]_{1}\} $ of powers of linear forms. We obtain the following result. 

\begin{theorem}\label{thm:fat-points}
	Let $ p_1,\ldots,p_r \in \mathbb C^n$ be a fat point scheme with multiplicities $ m_1,\ldots,m_k $, of multiplicity type $ \alpha\in \N_0^k $. If for all $i=1,\dots,k$ we have
	\begin{align}\label{eq:m-bound-fat}
		\alpha_{1} N_{m_1-1} + \ldots + \alpha_{i} N_{m_i-1} + N_{m_1-1}(N_{m_i-1}-1) < N_{D}.
	\end{align}
then $ I_{p_1}^{m_1} \cap \ldots \cap I_{p_r}^{m_r} $ has the expected dimension in degree $ D $.
On the other hand, if $ \alpha_{1} N_{m_1-1} + \ldots + \alpha_{k} N_{m_k-1} - N_{m_1-1}(N_{m_k-1}-1) > N_{D}$, then we have $ (f_1,\ldots,f_r)_{d_k+\ell} = \C[x]_{d_k+\ell} $. 
\end{theorem}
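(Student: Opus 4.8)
The plan is to reduce Theorem~\ref{thm:fat-points} to the general machinery of Section~\ref{sec:stationarity-lemma-rectangular}, exactly as was done for Fröberg's conjecture in Theorem~\ref{thm:froeberg}, but now working with the $\GL_n$-invariant subvarieties of powers of linear forms rather than with full spaces of forms. Concretely, I would set $V = \C[x_1,\ldots,x_n]_D$ with its canonical $\GL_n$-action (which is an irreducible $\GL_n$-module), and take $X = \mathcal{D}_1\cup\ldots\cup\mathcal{D}_k$, where $\mathcal{D}_i = \{\ell^{D-m_i+1} \mid \ell\in\C[x_1,\ldots,x_n]_1\}$ is the affine cone of $(D-m_i+1)$-st powers of linear forms. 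Each $\mathcal{D}_i$ is irreducible and $\GL_n$-invariant, so $X$ is a $\GL_n$-invariant affine cone with (at most) $k$ irreducible components. On $\mathcal{D}_i$, define the fiber of the bundle $E$ over $f = \ell^{D-m_i+1}$ to be $E_f = (f)_D = \ell^{D-m_i+1}\cdot\C[x_1,\ldots,x_n]_{m_i-1}$, the degree-$D$ component of the principal ideal generated by $f$; this is $\GL_n$-equivariant and has constant rank $N_{m_i-1} = \binom{n+m_i-2}{m_i-1}$ on $\mathcal{D}_i$.

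The key dictionary step is to identify $\langle E_{f_1},\ldots,E_{f_r}\rangle$ with the degree-$D$ component of the ideal in \eqref{eq:fat-point-complement}. Writing $p_j$ for the point dual to the linear form $\ell_j$, the form $\langle p_j, x\rangle^{D-m(p_j)+1}$ is exactly $\ell_j^{D-m_j+1}$ up to scalar, and its principal ideal in degree $D$ is $E_{\ell_j^{D-m_j+1}}$. So $\langle E_{f_1},\ldots,E_{f_r}\rangle = (\ell_1^{D-m_1+1},\ldots,\ell_r^{D-m_r+1})_D$, whose dimension (by apolarity) is $N_D$ minus the Hilbert function of the fat point ideal in degree $D$. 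General points $p_j$ of multiplicity type $\alpha$ correspond to a general sequence in $X$ of component type $\alpha$, so $\alpha$-nondefectivity of $E$ is precisely the statement that $I_{p_1}^{m_1}\cap\ldots\cap I_{p_r}^{m_r}$ has the expected dimension in degree $D$. Ordering the components so that $N_{m_1-1}\ge\ldots\ge N_{m_k-1}$ (which corresponds to $m_1>\ldots>m_k$, matching the hypothesis), the chain of inequalities \eqref{eq:dimension-constraint-k-components} with $N_i$ replaced by $N_{m_i-1}$ and $\dim V = N_D$ is exactly \eqref{eq:m-bound-fat}. Hence \Cref{thm:k-factors} yields $\alpha$-nondefectivity, proving the first assertion. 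The second assertion follows from \Cref{thm:diagonal-filling-criterion}: after checking $\alpha_i \ge N_{m_1-1}$ (which one gets for free in the regime where the filling inequality can hold), the constraint \eqref{eq:dimension-constraint-diagonal-filling} with $N_i = N_{m_i-1}$ and $\dim V = N_D$ is implied by the stated inequality $\alpha_1 N_{m_1-1}+\ldots+\alpha_k N_{m_k-1} - N_{m_1-1}(N_{m_k-1}-1) > N_D$, using $(N_1-1)(N_1+\ldots+N_k) \le k N_{m_1-1}(N_{m_1-1}-1)$ together with monotonicity of the $N_{m_i-1}$.

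The main obstacle I expect is not the application of \Cref{thm:k-factors} itself — that is a black box once the setup is in place — but rather verifying that the hypotheses of the stationarity machinery genuinely apply to the variety of powers of linear forms. Two points need care: first, that $E$ as defined really is a Zariski-locally trivial embedded vector bundle on each $\mathcal{D}_i$ (the fiber $\ell^{D-m_i+1}\cdot\C[x]_{m_i-1}$ has constant dimension $N_{m_i-1}$ as $\ell$ ranges over nonzero linear forms, and local triviality follows from the $\GL_n$-homogeneity of $\mathcal{D}_i\setminus\{0\}$); and second, that distinct components are genuinely distinct and that a general sequence of type $\alpha$ behaves as required — in particular that no chosen power of a linear form lies in $\mathcal{D}_i\cap\mathcal{D}_j$ for $i\ne j$, which holds because $\ell^a = \mu^b$ with $a\ne b$ forces $\ell,\mu$ proportional and $a=b$. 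I would also remark, as the authors do in the text preceding the statement, that the generalization of Nenashev's / the Fröberg-type argument to unions of $\GL_n$-invariant classes $\mathcal{D}_i$ (rather than full form spaces) is exactly what makes this reduction legitimate; the irreducibility of $V = \C[x]_D$ as a $\GL_n$-module is what powers the apex-space dichotomy inside the proof of \Cref{thm:k-factors}. Finally, the last displayed conclusion in the theorem statement ($(f_1,\ldots,f_r)_{d_k+\ell} = \C[x]_{d_k+\ell}$) should be read in the notation of the fat point setting as the filling statement $(\ell_1^{D-m_1+1},\ldots,\ell_r^{D-m_r+1})_D = \C[x_1,\ldots,x_n]_D$, equivalently that the fat point scheme imposes no conditions and the fat point ideal is empty in degree $D$; no further work beyond \Cref{thm:diagonal-filling-criterion} is needed.
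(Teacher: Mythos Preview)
Your proposal is correct and follows essentially the same route as the paper's own proof: set up the $\GL_n$-equivariant bundle $E$ with fibers $E_{\ell^{D-m_i+1}} = (\ell^{D-m_i+1})_D$ on the union $X = \mathcal{D}_1\cup\ldots\cup\mathcal{D}_k$ of powers-of-linear-forms cones, use apolarity to pass between the fat point ideal and the span of the $E_{f_j}$, then invoke \Cref{thm:k-factors} for the nondefectivity claim and \Cref{thm:diagonal-filling-criterion} for the filling claim. You supply considerably more verification of the hypotheses (local triviality, distinctness of components, the rank ordering $N_{m_1-1}\ge\ldots\ge N_{m_k-1}$, the matching of \eqref{eq:m-bound-fat} with \eqref{eq:dimension-constraint-k-components}) than the paper does, but the argument is the same.
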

\begin{proof}
	Consider the variety $ X = \mathcal{D}_1\cup \ldots \cup \mathcal{D}_r $, which is a union of powers of linear forms, as described above. 
	Note that both the dimensions and the expected dimensions of the spaces $ (I_{p_1}^{m_1} \cap \ldots \cap I_{p_r}^{m_r})_{D} $ and of $ (\langle p_1, x \rangle^{D-m_1+1},\ldots, \langle p_r, x \rangle^{D-m_r+1})_{D} $ sum up to $ N_D $. Hence, it suffices to show that the bundle $ E $, which has fibers $ E_{\ell^{D-m_i+1}} = (\ell^{D-m_i+1})_{D} $, is $ \alpha $-nondefective, where $\alpha=(\alpha_1,\dots,\alpha_k)$. The first claim now follows from \Cref{thm:k-factors} while the second one from  \Cref{thm:diagonal-filling-criterion}.
\end{proof}

As in the previous case, we plot an explicit class of examples. Here we fix $D=9, m_1=4$ and $m_2=3$. Moreover, with respect to the choice of $r$ points $p_1,\dots,p_r \in \C^n$, we impose that $\alpha_1:=\lfloor 0.3r \rfloor$ of them are of multiplicity $m_1=4$ and the remaining $\alpha_2=r-\alpha_1$ points are of multiplicity $m_2=3$. 

\begin{figure}[H]
\includegraphics[scale=0.5]{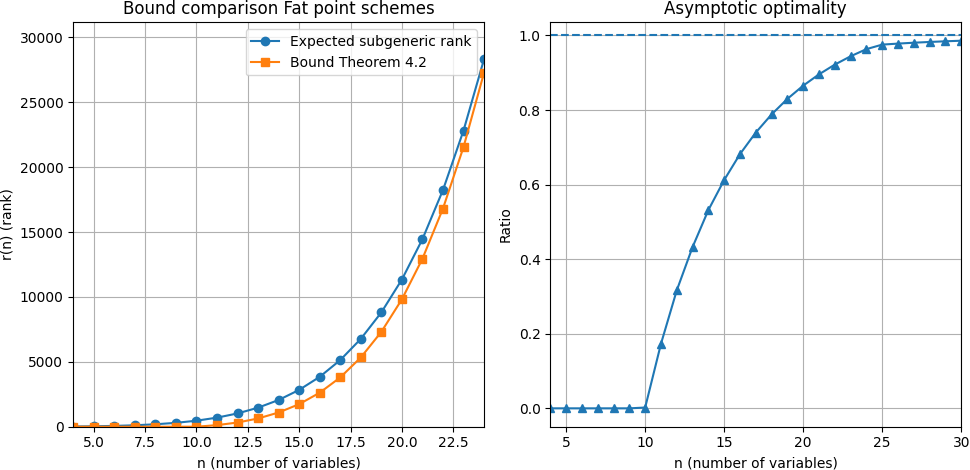}
\caption{\footnotesize{In the first figure the blue line selects the expected subgeneric rank $r$, i.e. the maximum value of $r$ such that $ (I_{p_1}^{m_1} \cap \ldots \cap I_{p_r}^{m_r})_{D} $ is not empty. The orange line plots the maximum value of $r$ such that condition \eqref{eq:m-bound-fat} holds true.
In the second figure we plot the ratio between the two bounds as before. Note that this time the bound gives a meaningful result only if the number of variables is large compared to the degree $D$.}}
\label{fig:fat-points}
\end{figure}

\subsection{Identifiability of partition rank decompositions}
Let $ d\in \N $ and $ k = (k_1,\ldots,k_l) $ a partition of $ d $, such that $ d = k_1 + \ldots + k_l $. Denote 
\begin{align*}
	\mathcal{P}_{k, \id} := \{t_1\otimes \ldots \otimes t_l \mid t_i \in (\C^n)^{\otimes k_i} \} \subseteq (\C^n)^{\otimes d}. 
\end{align*}

\noindent
We define for $ \sigma\in \mathfrak{S}_{l} $:
\begin{align*}
	\mathcal{P}_{k, \sigma} := \{t_{\sigma(1)}\otimes \ldots \otimes t_{\sigma(l)} \mid t_i \in (\C^n)^{\otimes k_i} \} \subseteq (\C^n)^{\otimes d}. 
\end{align*}

\begin{defi}
	We define the partition-rank-1 variety $ \mathcal{P}_{k}(\C^n) $ as the union of the irreducible varieties $ \mathcal{P}_{k, \sigma} $, where $ \sigma\in \mathfrak{S}_{l}  $. 
	The \emph{$ k $-partition rank} $r$ of a tensor $T$ in $(\C^n)^{\otimes d}$ is the minimum $ r\in \N_0 $ such that $ T $ can be written as a sum of $ r $ elements of  $ \mathcal{P}_{k}(\C^n) $. 
\end{defi}

\begin{theorem}\label{thm:partition-rank}
	Let $X=\bigcup_{\sigma} \mathcal{P}_{k, \sigma}$ be the partition-rank $1$ variety associated to $k=(k_1,\dots,k_l)$, a partition of $d$. The generic partition rank $ r_{g} $ with respect to the partition $ k \vdash d $ is bounded by
	\begin{align*}
		r_{g} \le \frac{n^{d}}{n^{k_1}  + \ldots + n^{k_l} - l + 1} + (n^{k_1}  + \ldots + n^{k_l} - l + 1)
	\end{align*}
	Moreover $X$ is $r$-non defective for
	\begin{align*}
		r < \frac{n^{d}}{n^{k_1}  + \ldots + n^{k_l} - l + 1} - (n^{k_1}  + \ldots + n^{k_l} - l + 1).
	\end{align*}	
\end{theorem}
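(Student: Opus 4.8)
The plan is to exhibit $X = \mathcal{P}_k(\C^n)$ as a $\GL_n$-invariant affine cone in the irreducible $\GL_n$-module $V = (\C^n)^{\otimes d}$ and then apply \Cref{cor:reducible-secants-nondefective}. Let $\GL_n$ act diagonally on the $d$ tensor slots of $V$; then $V$ is an irreducible $\GL_n$-module, and each stratum $\mathcal{P}_{k,\sigma}$ is preserved under this action, so $X$ is $\GL_n$-invariant. Since $\mathcal{P}_{k,\sigma}$ is the closure of the image of the product of vector spaces $\prod_{i=1}^{l}(\C^n)^{\otimes k_i}$ under the multilinear map $\mu_\sigma\colon(t_1,\ldots,t_l)\mapsto t_{\sigma(1)}\otimes\cdots\otimes t_{\sigma(l)}$, it is irreducible, and (after discarding coincidences among the $\sigma$) the $\mathcal{P}_{k,\sigma}$ are exactly the irreducible components of $X$; note that the number of such components is irrelevant, as the bounds in \Cref{cor:reducible-secants-nondefective} depend only on $\dim V$ and the maximal component dimension.

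Next I would compute the common dimension of these components. The source of $\mu_\sigma$ has dimension $\sum_{i=1}^{l} n^{k_i}$, and over a generic point $t_{\sigma(1)}\otimes\cdots\otimes t_{\sigma(l)}$ the fiber consists precisely of the rescalings $(\lambda_1 t_1,\ldots,\lambda_l t_l)$ with $\prod_i \lambda_i = 1$, a torus of dimension $l-1$, because a generic tensor in each $(\C^n)^{\otimes k_i}$ has no symmetries beyond scaling. Hence $\dim \mathcal{P}_{k,\sigma} = n^{k_1}+\cdots+n^{k_l}-l+1$ for every $\sigma$; call this quantity $N_{\max}$.

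With $\dim V = n^d$ and $N_{\max} = n^{k_1}+\cdots+n^{k_l}-l+1$ in hand, both assertions follow directly from \Cref{cor:reducible-secants-nondefective}. Part (2) gives $\sigma_r(X) = V$ once $r \ge \frac{n^d}{N_{\max}} + N_{\max}$, and since $\sigma_r(X)$ is by definition the Zariski closure of the locus of tensors of $k$-partition rank at most $r$, this yields $r_g \le \frac{n^d}{N_{\max}} + N_{\max}$. Part (1) gives $r$-nondefectivity of $X$ for all $r < \frac{n^d}{N_{\max}} - N_{\max}$ (the strict inequality in the statement being implied by the non-strict bound of the corollary).

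The only step requiring genuine care is the generic-fiber computation for $\mu_\sigma$ — namely that a generic block-decomposable tensor admits no identifications beyond the scaling torus — which rests on the rigidity up to scalars of a generic tensor in each factor $(\C^n)^{\otimes k_i}$; everything else is a mechanical translation of the partition-rank setup into the hypotheses of \Cref{cor:reducible-secants-nondefective}.
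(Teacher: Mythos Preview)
Your approach is exactly the paper's: compute $\dim V = n^d$, compute the (common) dimension $N_{\max} = n^{k_1}+\cdots+n^{k_l}-l+1$ of the components, and invoke \Cref{cor:reducible-secants-nondefective}. The dimension count via the generic fiber of $\mu_\sigma$ is fine.

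There is, however, one genuine slip. You claim that under the \emph{diagonal} $\GL_n$-action $(\C^n)^{\otimes d}$ is an irreducible $\GL_n$-module. This is false for $d\ge 2$: Schur--Weyl duality decomposes $(\C^n)^{\otimes d}$ into several irreducible $\GL_n$-summands (already $(\C^n)^{\otimes 2} = \Sym^2(\C^n)\oplus \bigwedge^2(\C^n)$). Since \Cref{cor:reducible-secants-nondefective} requires $V$ to be an irreducible $G$-module---this hypothesis is what forces the apex to be all of $V$ in the underlying stationarity argument---your choice of group does not meet the hypotheses.

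The fix is immediate: take $G = \GL_n^{\times d}$ acting factorwise. Then $(\C^n)^{\otimes d}$ is an irreducible $G$-module (it is the exterior tensor product of the standard representations), and each $\mathcal{P}_{k,\sigma}$ is still $G$-invariant, since acting by independent $g_1,\ldots,g_d$ on the $d$ slots sends a block $t_i\in(\C^n)^{\otimes k_i}$ to another element of $(\C^n)^{\otimes k_i}$. With this correction your argument goes through verbatim and coincides with the paper's.
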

\begin{proof}
	$ (\C^n)^{\otimes d} $ has dimension $ n^d $. The irreducible components of $ \mathcal{P}_{k} $ are all equidimensional of dimension $ n^{k_1}  + \ldots + n^{k_l} - l + 1 $. From \Cref{cor:reducible-secants-nondefective}, we obtain the desired result. 
\end{proof}

\begin{remark}
	Our setting fixes a partition and considers all permutations associated with that partition. It is of course possible to consider several partitions at once. In that case, the irreducible components of $ X $ are not equidimensional and one obtains a more complicated bound, similar to \Cref{thm:froeberg} and \Cref{thm:fat-points}. Naslund \cite{Naslund_2020} originally considered partition ranks as a generalization of slice rank. Slice rank is the special case corresponding to the partition $ k = (1, d-1) $. Note that our bound for nondefectivity is vacuous in the case of slice rank. This is expected, since secants to the slice rank-$ 1 $ variety are defective. 
\end{remark}

\subsection{Mixture distributions}

A common problem in applications is to identify the parameters of a mixture distribution from its moments. This is especially important for Gaussian mixtures, see the explanations in (\cite{Blomenhofer_Casarotti_Michalek_Oneto_2022}, \cite{Taveira_Blomenhofer_GM6_2025}), but also for other types of distributions. The results from \cite{Blomenhofer_Casarotti_2023} and \cite{Taveira_Blomenhofer_GM6_2025} guarantee the identifiability of Gaussian mixtures, and, in principle, generalize to mixtures of other types of $ \GL_n $-invariant distributions. 

However, it was previously unknown what happens if one desires two different types of mixtures. As one concrete example, one might ask if a mixture of 5 Gaussian distributions $ \mathcal{N}(\mu_1, \Sigma_1),\ldots,\mathcal{N}(\mu_5, \Sigma_5) $ and 7 Laplace distributions $ L(\mu_6, \Sigma_6),\ldots,L(\mu_{12}, \Sigma_{12}) $ is \emph{algebraically identifiable}, i.e., if for general $ \mu_i, \Sigma_i $, the moments of degree, say $ 5 $, determine the parameters up to finitely many possibilities. 
With our \Cref{thm:2-factors} and \Cref{thm:k-factors}, we can contribute to this problem. We first need a bit of background on probability theory. 
If $ Y $ is a (real-valued) random vector on $ \R^n $, then its characteristic function is defined as 
\begin{align*}
	\varphi_{Y}(t) = \E[\exp(i t^{T}Y)], \qquad (t\in \mathbb C^n)
\end{align*}
It is well-known that the characteristic function uniquely determines its probability distribution. The characteristic function can of course be viewed as a power series in variables $ t_1,\ldots,t_n $. 
The \emph{moment forms} $ \mathcal{M}_d(Y) \in \C[t_1,\ldots,t_n]_{d}$ of $ Y $ are defined as the $ d $-homogeneous parts of $ \varphi_{Y}(-it) $, up to rescaling as follows:
\begin{align*}
	\mathcal{M}_d(Y) = \frac{1}{d!} [\varphi_{Y}(-it)]_{d}
\end{align*}
Here, we denote by $ [f]_{d} $ the $ d $-homogeneous part of a power series. Some parameterized families of probability distributions have nice expressions for the moment forms. E.g., if $ Y\sim \mathcal{N}(\mu, \Sigma) $ is Gaussian distributed with mean $ \mu\in \R^n $ and covariance matrix $ \Sigma \in \R^{n\times n} $, then $ \varphi_{Y}(t) = \exp(\i t^{T}\mu - \frac{1}{2} t^{T}\Sigma t) $. 
On the other hand, if $ Z\sim L(\mu, \Sigma) $ is symmetric multivariate Laplacian distributed with mean $ \mu $ and scale matrix $ \Sigma $, then the characteristic function is given by 
\begin{align*}
	\varphi_{Y}(t) = \frac{\exp\bigl(\i \mu^T t\bigr)}{1 + \tfrac{1}{2}\, t^T \Sigma t}, 
\end{align*}
Abbreviating $ q_{\Sigma} = t^{T}\Sigma t $ and $ \ell_{\mu} = t^{T}\mu $, this yields the expressions 
\begin{align*}
	\mathcal{M}_5(\mathcal{N}(\mu, \Sigma)) &= \ell_{\mu}^5 + 10q_{\Sigma}\ell_{\mu}^3 + 15q_{\Sigma}^2\ell_{\mu}, \\
	\mathcal{M}_5(L(\mu, \Sigma)) &=\ell_{\mu}^5 + 10q_{\Sigma}\ell_{\mu}^3 + 30q_{\Sigma}^2\ell_{\mu}.
\end{align*} 
The Gaussian moment variety $ \GM_5(\mathbb C^n) $ is defined as the Zariski closure of
\begin{align*}
	\{\ell_{\mu}^5 + 10q_{\Sigma}\ell_{\mu}^3 + 15q_{\Sigma}^2\ell_{\mu} \mid \mu\in \mathbb C^n, \Sigma\in \mathbb C^{n\times n}\}.
\end{align*}
Likewise, one may define the Laplacian moment variety $ \mathrm{LM}_{5}(\mathbb C^n) $. 
Note that in applications, one is of course only interested in real points of $ \GM_5(\mathbb C^n) $, where $ \Sigma $ is positive definite. However, algebraic identifiability of the complex variety implies algebraic identifiability of its real points, since the real points form a Zariski dense subset of $ \GM_5(\mathbb C^n) $.

\begin{theorem}\label{thm:mixture-gaussian-laplace}
	A mixture of $ r $ Gaussians $ (\mathcal{N}(\mu_i, \Sigma_i))_{i = 1,\ldots,r} $ and $ s $ Laplace distributions $ (L(\mu_{r+i}, \Sigma_{r+i}))_{i = 1,\ldots,s} $ is algebraically identifiable from degree 5 moments, if 
	$ (r+s) \le \dfrac{\binom{n+4}{5}}{\binom{n+1}{2} + n} - (\binom{n+1}{2} + n) $. 
\end{theorem}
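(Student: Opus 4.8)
The plan is to realize the moment varieties as irreducible components of a single $\GL_n$-invariant affine cone and then invoke \Cref{cor:reducible-secants-nondefective}. Concretely, set $V = \C[t_1,\ldots,t_n]_5$, which carries the natural $\GL_n$-action and is an irreducible $\GL_n$-module; its dimension is $N_{\max\text{-ambient}} = \binom{n+4}{5}$. Let $X_1 = \GM_5(\C^n)$ be the (affine cone over the) Gaussian moment variety and $X_2 = \mathrm{LM}_5(\C^n)$ the Laplacian moment variety, and put $X = X_1 \cup X_2$. Both $X_1$ and $X_2$ are $\GL_n$-invariant, since the parametrizations $(\ell_\mu, q_\Sigma) \mapsto \ell_\mu^5 + 10 q_\Sigma \ell_\mu^3 + c\, q_\Sigma^2 \ell_\mu$ (with $c = 15$ resp. $c = 30$) are $\GL_n$-equivariant in $(\mu,\Sigma)$. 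By Terracini's lemma, algebraic (i.e.\ finite-to-one) identifiability of a mixture of $r$ Gaussians and $s$ Laplacians from degree-$5$ moments is implied by $(r+s)$-nondefectivity of $X$: a general point of the relevant join $J(X_1,\ldots,X_1,X_2,\ldots,X_2)$ (with $r$ copies of $X_1$ and $s$ of $X_2$) has a fiber of the parametrization map of the expected dimension $0$ precisely when the join has the expected dimension, which is exactly $\alpha$-nondefectivity for $\alpha = (r,s)$.

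The key input is the dimension count for the two components. I would compute $\dim X_1 = \dim X_2 = \binom{n+1}{2} + n$: indeed, the parameter space for $(\mu, \Sigma)$ has dimension $n + \binom{n+1}{2}$ (mean plus symmetric matrix), and one checks that the parametrization map is generically finite onto its image — this is standard for Gaussian moment varieties (cf. the references \cite{Amendola_Faugere_Sturmfels_2016}, \cite{Blomenhofer_Casarotti_Michalek_Oneto_2022}) and the same argument applies verbatim to the Laplacian variety because the moment forms differ only in the leading coefficient of the $q_\Sigma^2 \ell_\mu$ term, so the generic fiber computation is identical. Thus both irreducible components of $X$ have maximum dimension $N_{\max} = \binom{n+1}{2} + n$.

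With $N_{\max} = \binom{n+1}{2} + n$ and $\dim V = \binom{n+4}{5}$ in hand, \Cref{cor:reducible-secants-nondefective}(1) gives that $X$ is $r'$-nondefective for every $r' \le \frac{\dim V}{N_{\max}} - N_{\max} = \frac{\binom{n+4}{5}}{\binom{n+1}{2}+n} - (\binom{n+1}{2}+n)$. Applying this with $r' = r+s$ yields the stated bound, and then unwinding Terracini as above gives algebraic identifiability.

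The main obstacle I expect is not the application of \Cref{cor:reducible-secants-nondefective}, which is immediate once the setup is in place, but rather the two auxiliary facts that feed into it: (i) that $\GM_5(\C^n)$ and $\mathrm{LM}_5(\C^n)$ are genuinely distinct irreducible components (so that the union really has two components and the notion of $(r,s)$-type sequence is the right one) — this follows because the coefficient $15$ versus $30$ makes the two parametrized families not contained in one another, but it should be stated carefully; and (ii) the generic finiteness of the parametrizations, i.e.\ that $\dim X_i$ equals the number of parameters and not less, which is what makes "expected dimension of the join" translate into "finite generic fiber" and hence into algebraic identifiability. Both are essentially known in the Gaussian case and transfer to the Laplacian case by the same computations, so I would cite the Gaussian references and remark that the argument is insensitive to the specific nonzero coefficients appearing in the moment forms.
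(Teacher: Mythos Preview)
Your proposal is correct and follows essentially the same route as the paper: realize the two moment varieties as $\GL_n$-invariant irreducible cones of equal dimension $\binom{n+1}{2}+n$ inside the irreducible $\GL_n$-module $\C[t]_5$, then invoke the main nondefectivity criterion (the paper cites \Cref{thm:2-factors} directly with $E=T$, you cite its consequence \Cref{cor:reducible-secants-nondefective}, which amounts to the same thing here). The two obstacles you flag are exactly the ones the paper addresses: it handles your point~(i) by working formally with the disjoint union $(X_1)_{\smooth}\sqcup (X_2)_{\smooth}$ and then notes in a remark that the two smooth loci do not in fact meet (via the explicit $15$-vs-$30$ coefficient argument you allude to), and your point~(ii) is implicit in the equidimensionality claim.
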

\begin{proof}
	The ring of $ d $-forms in $ x_1,\ldots,x_n $ is an irreducible $ \GL_n $-module and both $ X_1 = \GM_5(\mathbb C^n) $ and $ X_2 = \mathrm{LM}_5(\mathbb C^n) $ are $ \GL_n $-invariant varieties. Hence, we may apply \Cref{thm:2-factors} with the equidimensional variety $ X = (X_1)_{\smooth} \sqcup (X_2)_{\smooth} $ and $ E = T $ the tangent bundle. 
\end{proof}

\begin{remark}
	As a small technical detail, note that in the proof of \Cref{thm:mixture-gaussian-laplace}, the sets $ (X_1)_{\smooth} $ and $ (X_2)_{\smooth} $ do not intersect, so the disjoint union is unnecessary. Indeed, if we had $ f:= \mathcal{M}_5(\mathcal{N}(\mu, \Sigma)) = \mathcal{M}_5(L(\nu, S)) $ for some parameters, then both $ \ell_{\mu} $ and $ \ell_{\nu} $ would be the unique linear form dividing $ f $ and must thus be equal. Using $ \ell_{\mu} = \ell_{\nu} $, the identity simplifies to $ 10q_{\Sigma}\ell_{\mu}^2 + 15q_{\Sigma}^2 = 10q_{S}\ell_{\mu}^2 + 30q_{S}^2 $. By evaluating on the vanishing locus of $ \ell_{\mu} $, we see that $ q_{\Sigma} \equiv \pm 2q_S $ modulo $ \ell_{\mu} $. A short check shows that this is only possible, if both $ q_{\Sigma}$ and $ q_{S} $ are multiples of $ \ell_{\mu}^2 $. 
\end{remark}

Similar results may of course proved for other parameterized types of mixture distributions, provided their moment forms have a suitable description in terms of the parameters and the map from the parameter space to the moment forms is $ \GL_n $-equivariant. 
For the concrete example mentioned above, we obtain that a mixture of 5 Gaussians and 7 Laplace distributions is algebraically identifiable from degree-$ 5 $ moments, if $ n\ge 27 $. Note that we cannot have algebraic identifiability for any degree strictly smaller than $ 5 $, since the Gaussian moment variety $ \GM_4(\mathbb C^n) $ is already $ 2 $-defective \cite{Taveira_Blomenhofer_GM6_2025}. 

\medskip
\paragraph{Asymptotic optimality}
The result in \Cref{thm:mixture-gaussian-laplace} is asymptotically optimal. Indeed, algebraic identifiability can for dimension reasons only hold, if $ (r+s) \le \dfrac{\binom{n+4}{5}}{\binom{n+1}{2} + n} $. The additive term $ -\binom{n+1}{2} - n $ on the right hand side in \Cref{thm:mixture-gaussian-laplace} is of order $ \mathcal{O}(n^2) $ and thus asymptotically dominated by the left term, which is in $ \mathcal{O}(n^3) $. As discussed in \cite{Blomenhofer_Casarotti_2023}, in the setting of $ \GL_n $-invariant subvarieties of tensor spaces, stationarity arguments often tend to produce bounds, which are asymptotically optimal for large values of $ n $. See also the examples in \cite{Blomenhofer_Casarotti_2023}.

	\subsection*{Acknowledgements}
	Alexander Taveira Blomenhofer is supported by the ERC grant of Matthias Christandl under Agreement 818761.
	Alex Casarotti is supported by the PRIN 2022 (Birational geometry of moduli spaces and special varieties) grant of Alex Massarenti. 
	Part of this work was done while Alexander Blomenhofer was visiting the University of Ferrara. We thank Alex Massarenti for inviting and funding the visit via his grant.

	\bibliography{bibML}

\begin{thebibliography}{10}

\bibitem{Abo_Brambilla_Galuppi_Oneto_2024}
Hirotachi Abo, Maria~Chiara Brambilla, Francesco Galuppi, and Alessandro Oneto.
\newblock Non-defectivity of {S}egre-{V}eronese varieties.
\newblock {\em Proceedings of the American Mathematical Society, Series B},
  11:589--602, 2024.

\bibitem{Adlandsvik_1988}
Bjørn Adlandsvik.
\newblock Varieties with an extremal number of degenerate higher secant
  varieties.
\newblock {\em Journal für die reine und angewandte Mathematik}, 392:16--26,
  1988.

\bibitem{AH95}
J.~Alexander and A.~Hirschowitz.
\newblock Polynomial interpolation in several variables.
\newblock {\em Journal of Algebraic Geometry}, 4(4):201--222, 1995.

\bibitem{Amendola_Faugere_Sturmfels_2016}
Carlos Améndola, Jean-Charles Faugère, and Bernd Sturmfels.
\newblock Moment varieties of {G}aussian mixtures.
\newblock {\em Journal of Algebraic Statistics}, 7(1), 2016.

\bibitem{Amendola_Ranestad_Sturmfels_2018}
Carlos Améndola, Kristian Ranestad, and Bernd Sturmfels.
\newblock Algebraic identifiability of {Gaussian} mixtures.
\newblock {\em International Mathematics Research Notices},
  2018(21):6556--6580, November 2018.

\bibitem{Taveira_Blomenhofer_GM6_2025}
Alexander~Taveira Blomenhofer.
\newblock Gaussian mixture identifiability from degree 6 moments.
\newblock {\em Algebraic Statistics}, 16(1):1--28, 2025.

\bibitem{Blomenhofer_Casarotti_2023}
Alexander~Taveira Blomenhofer and Alex Casarotti.
\newblock Nondefectivity of invariant secant varieties, 2023.
\newblock arXiv:2312.12335.

\bibitem{Blomenhofer_Casarotti_Michalek_Oneto_2022}
Alexander~Taveira Blomenhofer, Alex Casarotti, Mateusz Michałek, and
  Alessandro Oneto.
\newblock Identifiability for mixtures of centered {G}aussians and sums of
  powers of quadratics.
\newblock {\em Bulletin of the London Mathematical Society}, 55(5):2407--2424,
  2023.

\bibitem{SHGH}
Ciro Ciliberto, Brian Harbourne, Rick Miranda, and Joaquim Ro{\'e}.
\newblock Variations of {N}agata's conjecture.
\newblock In {\em A celebration of algebraic geometry}, volume~18 of {\em Clay
  Mathematics Proceedings}, pages 185--203, Providence, RI, 2013. American
  Mathematical Society.

\bibitem{Cools07}
Filip Cools, Marcin Dumnicki, Brian Harbourne, Uwe Nagel, and Tomasz Szemberg.
\newblock Postulation of general fat point schemes in $\mathbb{P}^2$.
\newblock {\em Journal of Algebra}, 316(2):477--488, 2007.

\bibitem{Postinghel_Dumitrescu}
Olivia Dumitrescu and Elisa Postinghel.
\newblock Vanishing theorems for linearly obstructed divisors.
\newblock {\em Journal of Algebra}, 477:312--359, 2017.

\bibitem{froberg1985inequality}
Ralf Fr{\"o}berg.
\newblock An inequality for {H}ilbert series of graded algebras.
\newblock {\em Mathematica Scandinavica}, 56(2):117--144, 1985.

\bibitem{Ge_Huang_Kakade_2015}
Rong Ge, Qingqing Huang, and Sham~M. Kakade.
\newblock Learning mixtures of gaussians in high dimensions.
\newblock In {\em Proceedings of the Forty-Seventh Annual ACM Symposium on
  Theory of Computing}, STOC '15, page 761–770, New York, NY, USA, 2015.
  Association for Computing Machinery.

\bibitem{Hartshorne}
Robin Hartshorne.
\newblock {\em Algebraic Geometry}, volume~52 of {\em Graduate Texts in
  Mathematics}.
\newblock Springer, New York, 1977.

\bibitem{Postinghel_2023}
Antonio Laface, Elisa Postinghel, and Luis~José {Santana Sánchez}.
\newblock On linear systems with multiple points on a rational normal curve.
\newblock {\em Linear Algebra and its Applications}, 657:197--240, 2023.

\bibitem{Migliore2003}
J.~Migliore and R.M. {Miró Roig}.
\newblock Ideals of general forms and the ubiquity of the weak {L}efschetz
  property.
\newblock {\em Journal of Pure and Applied Algebra}, 182(1):79--107, 2003.

\bibitem{Migliore20031}
J.~Migliore and R.M. {Miró Roig}.
\newblock On the minimal free resolution of n + 1 general forms.
\newblock {\em Transactions of the American Mathematical Society},
  355(1):1--36, 2003.

\bibitem{Moitra_Valiant_2010}
Ankur Moitra and Gregory Valiant.
\newblock Settling the polynomial learnability of mixtures of {G}aussians.
\newblock {\em 2010 IEEE 51st Annual Symposium on Foundations of Computer
  Science}, 2010.

\bibitem{Naslund_2020}
Eric Naslund.
\newblock The partition rank of a tensor and k-right corners in $
  \mathbb{F}_q^n $.
\newblock {\em Journal of Combinatorial Theory, Series A}, 174:105190, 2020.

\bibitem{nenashev2017note}
Gleb Nenashev.
\newblock A note on {F}röberg's conjecture for forms of equal degrees.
\newblock {\em Comptes Rendus Mathématique}, 355(3):272--276, 2017.

\bibitem{Oneto2025}
Alessandro Oneto and Emanuele Ventura.
\newblock Ranks of tensors: geometry and applications.
\newblock {\em Bollettino dell'Unione Matematica Italiana}, 18(3):751--778, Sep
  2025.

\bibitem{Tao_2016_SliceRankNotes}
Terence Tao.
\newblock Notes on the ``slice rank'' of tensors.
\newblock
  \url{https://terrytao.wordpress.com/2016/08/24/notes-on-the-slice-rank-of-tensors/},
  August 2016.
\newblock Blog post.

\bibitem{Tao_2016_SymmetricCapset}
Terence Tao.
\newblock A symmetric formulation of the
  {Croot}-{Lev}-{Pach}-{Ellenberg}-{Gijswijt} capset bound.
\newblock
  \url{https://terrytao.wordpress.com/2016/05/18/a-symmetric-formulation-of-the-croot-lev-pach-ellenberg-gijswijt-capset-bound/},
  May 2016.
\newblock Blog post.

\bibitem{Te12}
A.~Terracini.
\newblock Sulle ${V}_{k}$ che rappresentano pi\`u di$\frac{k(k-1)}{2}$
  equazioni di {L}aplace linearmente indipendenti.
\newblock {\em Rendiconti del Circolo Matematico di Palermo}, 33:176–186,
  1912.

\bibitem{zak1993tangents}
F.L. Zak.
\newblock {\em Tangents and Secants of Algebraic Varieties}, volume 127 of {\em
  Translations of mathematical monographs}.
\newblock American Mathematical Society, 1993.

\end{thebibliography}
	\bibliographystyle{plain}
\end{document}